\providecommand{\customgenericname}{}
\newcommand{\newcustomtheorem}[2]{%
  \newenvironment{#1}[1]
  {%
   \renewcommand\customgenericname{#2}%
   \renewcommand\theinnercustomgeneric{##1}%
   \innercustomgeneric
  }
  {\endinnercustomgeneric}
}
\def\BibTeX{{\rm B\kern-.05em{\sc i\kern-.025em b}\kern-.08em
    T\kern-.1667em\lower.7ex\hbox{E}\kern-.125emX}}
\newtheorem{theorem}{Theorem}[section]
\newtheorem{definition}{Definition}[section]
\newtheorem{assumption}{Assumption}[section]
\newtheorem{lemma}[theorem]{Lemma}
\begin{document}
\title{Risk Assessment of Stealthy Attacks on Uncertain Control Systems}
\author{Sribalaji C. Anand, Andr\'e M. H. Teixeira, and Anders Ahl\'en
\thanks{This work is supported by the Swedish Research Council under the grant 2018-04396 and by the Swedish Foundation for Strategic Research.}
\thanks{Sribalaji. C. Anand and Anders Ahl\'en are with the Department of Electrical Engineering, Uppsala University, PO Box 65, SE-75103, Uppsala, Sweden. (email: \{sribalaji.anand, anders.ahlen\}@angstrom.uu.se)}
\thanks{Andr\'e M. H. Teixeira is with the Department of Information Technology, Uppsala University, PO Box 337, SE -75105, Uppsala, Sweden. email:andre.teixeira@it.uu.se}}

\maketitle
\begin{abstract}
In this article, we address the problem of risk assessment of stealthy attacks on uncertain control systems. Considering data injection attacks that aim at maximizing impact while remaining undetected, we use the recently proposed output-to-output gain to characterize the risk associated with the impact of attacks under a limited system knowledge attacker. The risk is formulated using a well-established risk metric, namely the maximum expected loss. Under this setups, the risk assessment problem corresponds to an untractable infinite non-convex optimization problem. To address this limitation, we adopt the framework of scenario-based optimization to approximate the infinite non-convex optimization problem by a sampled non-convex optimization problem. Then, based on the framework of dissipative system theory and S-procedure, the sampled non-convex risk assessment problem is formulated as an equivalent convex semi-definite program. Additionally, we derive the necessary and sufficient conditions for the risk to be bounded. Finally, we illustrate the results through numerical simulation of a hydro-turbine power system.
\end{abstract}

\begin{IEEEkeywords}
Security, Uncertainty, Risk analysis, Optimization.
\end{IEEEkeywords}

\section{Introduction}\label{Intro}

Research in the security of industrial control systems has received considerable attention \cite{dibaji2019systems} due to increased number of cyber-attacks such as the one on Ukrainian power grid \cite{case2016analysis}, Kemuri water company \cite{hemsley2018history}, etc. One of the common recommendations for improving the security of control systems is to follow the risk management cycle: Risk assessment, risk response, and risk monitoring \cite{risk}. This article focuses on risk assessment, the formal definition of which will be introduced later as a function of the attack impact.

Risk is often a combination of attack impact and/or likelihood. For instance, the risk is characterized in terms of average impact in \cite{cardenas2011attacks} for different types of attacks. The consequences of data injection attacks are quantified using the conditional value-at-risk in \cite{CVAR1}. The calculated risk can later be used to compute optimal defense-allocation strategies \cite{teixeira2015secure} and/or design robust controllers/detectors. Risk assessment of combined data integrity and availability attacks against the power system state estimation is conducted in \cite{pan2018cyber}. From this brief discussion, it can be realized that characterizing risk in terms of attack impact and likelihood is critical for the efficient allocation of protection resources. In the literature, the problem of risk assessment of stealthy attacks on uncertain control control has not been addressed. To the best of the authors knowledge, the works that are closely related to this problem are \cite{park2019stealthy}, \cite{harshbarger2020little} and \cite{murguia2020security}.

Firstly, \cite{park2019stealthy} designs a stealthy attack against an uncertain system using disclosure resources. Secondly, \cite{harshbarger2020little} focuses on attack detection based on plant and model mismatch for the adversary. The results of both the above works cannot facilitate the optimal allocation of protection resources. 

Thirdly, \cite{murguia2020security} proposes an impact metric by computing a bound on the reachable set of states by an attacker for a perturbed system dynamics. It also proposes a second metric by computing the distance between the reachable set of states for the adversary and the set of critical states. However it considers a deterministic system.

The advantage of our study is multi-fold. Firstly, we consider a generic modeling framework similar to \cite{Andre1}. Secondly, unlike many previous works, we consider a system description with parametric uncertainty. Thirdly, similar to \cite{milovsevic2020actuator}, we consider an adversarial setup where the system knowledge of the adversary is limited. Finally, we consider a recently proposed impact metric: Output-to-Output Gain ($OOG$) \cite{teixeira2015strategic}. The main advantage of using this impact metric, as opposed to the classical $H_{\infty}$ and $H_{\_}$ metrics is, the $OOG$ metric based design problem focuses on improving detectability only when the impact of the attack is sufficiently high at the same frequency \cite{IFAC}. In other words, the $OOG$ metric is more amenable to risk-optimal system design for increased security. 
% \subsection{Contributions}
Under the described setup, we present the following contributions.
\begin{enumerate}
\item Using $OOG$ as impact metric, and the maximum expected loss as a risk metric, we formulate the risk assessment problem. We observe that the risk assessment problem corresponds to an untractable infinite non-convex robust optimization problem which is NP-hard in general. 
\item We propose a convex SDP which solves the risk assessment problem approximately by sampling the uncertainty set. Additionally, we provide  probabilistic guarantees on the original robust optimization problem. 
\item We derive the necessary and sufficient conditions for the risk metric to be bounded.
\end{enumerate}
% \subsection{Outline}

The remainder of the article is organized as follows. The uncertain system and the adversary is described in Section \ref{description}. The problem is formulated in Section \ref{PF}, to which an approximate solution is presented in Section \ref{rational}. The efficacy of the proposed optimization problem is illustrated through a  numerical example in Section \ref{Example}. We conclude the article in Section \ref{Conclusion}.

\textit{Notation}: Throughout this article, $\mathbb{R}, \mathbb{C}, \mathbb{Z}$ and $\mathbb{Z}^{+}$ represent the set of real numbers, complex numbers, integers and non-negative integers respectively. A positive (semi-)definite matrix $A$ is denoted by $A \succ 0 (A \succeq 0)$. Let $x: \mathbb{Z} \to \mathbb{R}^n$ be a discrete-time signal with $x[k]$ as the value of the signal $x$ at the time step $k$. Let the time horizon be $[0,N]=\{ k \in \mathbb{Z}^+|\; 0 \leq k\leq N \}$. The $\ell_2$-norm of $x$ over the horizon $[0,N]$ is represented as $|| x ||_{\ell_2, [0,N]}^2 \triangleq \sum_{k=0}^{N} x[k]^Tx[k]$. Let the space of square summable signals be defined as $\ell_2 \triangleq \{ x: \mathbb{Z}^+ \to \mathbb{R}^n |\; ||x||^2_{\ell_2, [0,\infty]} < \infty\}$ and the extended signal space be defined as $\ell_{2e} \triangleq \{ x: \mathbb{Z}^+ \to \mathbb{R}^n | \;||x||^2_{[0,N]} < \infty, \forall N \in \mathbb{Z}^+ \}$. For the sake of simplicity, we represent $||x||^2_{\ell_2,[0,\infty]}$ as $||x||^2_{\ell_2}$. For $x \in \mathbb{R}, \left \lceil {x} \right \rceil$ represents $x$ rounded to the nearest integer greater than or equal to $x$. Let $(\Omega, \mathcal{D}_a, \mathbf{P})$ represent a probability space with sample space $\Omega \subset \mathbb{R}^v$, $\sigma-$algebra $\mathcal{D}_a$, and probability measure $\mathbf{P}$. Let $(\Omega^w, \mathcal{D}_a^w, \mathbf{P}^w)$ represent the $w$-times Cartesian product of $\Omega$ with the $\sigma-$algebra $\mathcal{D}_a^w$ and the probability measure $\mathbf{P}^w = \mathbf{P} \times \dots \times \mathbf{P}$. A point drawn from $(\Omega^w, \mathcal{D}_a^w, \mathbf{P}^w)$ is thus $(\delta_1,\delta_2,\dots\delta_w)$, i.e., $w$ independent elements in $\mathbb{R}^v$ drawn independently from $\Omega$ according to the same probability $\mathbf{P}$.\footnote{In this article, the Cartesian product is considered over the same probability space ($\Delta$). But this can be generalized to arbitrary probability spaces.}. The relative importance of various uncertainty outcomes of an arbitrary function $f(\delta),\; \delta \triangleq \begin{bmatrix} \delta_1 & \dots & \delta_m \end{bmatrix}^T \in \Omega,$  is represented by $\mathbb{P}_{\Omega}(f(\delta))$. An empty set is denoted by $\emptyset$. For any $a, b \in \mathbb{R}$ and $a \geq b$, ${a \choose b} = \frac{a!}{b!(a-b)!}$. The non-zero elements of a vector $s \in \mathbb{R}^{n}$ is denoted by $\text{supp}(s)$. The cardinality of the non-zero elements in a vector $s \in \mathbb{R}^{n}$ is denoted by  $||s||_0 \triangleq |\text{supp}(s)|$, where $|\text{supp}(s)|$ represents the cardinality of the set $\text{supp}(s)$.

\section{Problem background}\label{description}

\begin{figure}
    \centering
    \includegraphics[width=8.4cm]{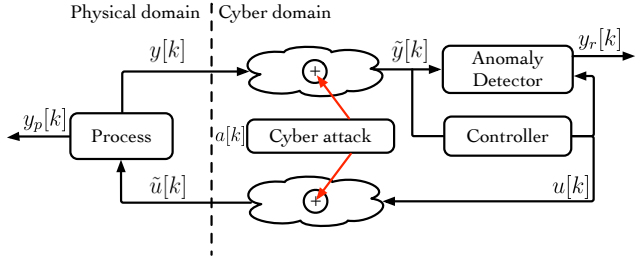}
    \caption{Control system under data injection attack}
    \label{System}
\end{figure}
In this section, we describe the control system structure and the goal of the adversary. Consider the general description of a closed-loop DT LTI system with a process ($\mathcal{P}$), feedback controller ($\mathcal{C}$) and an anomaly detector ($\mathcal{D}$) as shown in Fig. \ref{System} and represented by
\begin{align}
    \mathcal{P}: & \left\{
                \begin{array}{ll}
                    x_p[k+1] &= A^{\Delta}x_p[k] + B^{\Delta} \tilde{u}[k]\\
                    y[k] &= C^{\Delta}x_p[k]\\
                    y_p[k] &= C_Jx_p[k] +D_J\tilde{u}[k]
                \end{array}
                \right. \label{P}\\
    \mathcal{C}: & \left\{
                 \begin{array}{ll}
                     z[k+1] &= A_cz[k]+ B_c\tilde{y}[k]\\
                     u[k] &= C_cz[k] + D_c\tilde{y}[k]
                \end{array}
                \right. \label{C} \\
    \mathcal{D}: & \left\{
                \begin{array}{ll}
                s[k+1] &= A_es[k] +B_e u[k] + K_e \tilde{y}[k]\\
                y_r[k] &= C_es[k] +D_eu[k] + E_e \tilde{y}[k]
                \end{array}
                \right. \label{D}
\end{align}
% \begin{align}
%     \mathcal{P}: & \left\{
%                 \begin{array}{ll}
%                     x_p[k+1] &= A^{\Delta}x_p[k] + B^{\Delta} \tilde{u}[k]\\
%                     y[k] &= C^{\Delta}x_p[k]\\
%                     y_p[k] &= C_J^{\Delta}x_p[k] +D_J^{\Delta} \tilde{u}[k]
%                 \end{array}
%                 \right. \label{P}\\
%     \mathcal{C}: & \left\{
%                  \begin{array}{ll}
%                      z[k+1] &= A_c^{\Delta}z[k]+ B_c^{\Delta}\tilde{y}[k]\\
%                      u[k] &= C_c^{\Delta}z[k] + D_c^{\Delta}\tilde{y}[k]
%                 \end{array}
%                 \right. \label{C} \\
%     \mathcal{D}: & \left\{
%                 \begin{array}{ll}
%                 s[k+1] &= A_e^{\Delta}s[k] +B_e^{\Delta} u[k] + K_e^{\Delta} \tilde{y}[k]\\
%                 y_r[k] &= C_e^{\Delta}s[k] +D_e^{\Delta}u[k] + E_e^{\Delta} \tilde{y}[k]
%                 \end{array}
%                 \right. \label{D}
% \end{align}
where  $A^{\Delta} \triangleq A + \Delta A(\delta)$ with $A$ representing the nominal system matrix and $\delta \in \Omega$. Additionally we assume $\Omega$ to be closed,  bounded and to include the zero uncertainty yielding $\Delta A(0) = 0$. The other matrices are similarly expressed. The state of the process is represented by $x_p[k] \in \mathbb{R}^{n_x}$, $z[k] \in \mathbb{R}^{n_z}$ is the state of the controller, $s[k] \in \mathbb{R}^{n_s}$ is the state of the observer, $\tilde{u}[k] \in \mathbb{R}^{n_u}$ is the control signal received by the process, $ u[k] \in \mathbb{R}^{n_u}$ is the control signal generated by the controller, $y[k] \in \mathbb{R}^{n_m}$ is the measurement output produced by the process, $\tilde{y}[k] \in \mathbb{R}^{n_m}$ is the measurement signal received by the controller and the detector, $y_p[k] \in \mathbb{R}^{n_p}$ is the virtual performance output, and $y_r[k] \in \mathbb{R}^{n_r}$ is the residue generated by the detector. In general, the system is considered to have a good performance when the energy of the performance output $||y_p||_{\ell_2}^2$ is small and an anomaly is considered to be detected when the detector output energy $||y_r||_{\ell_2}^2$ is greater than a predefined threshold, say $\epsilon_r$. We explain the reason to adopt such a logic in Appendix \ref{app_explain}. Without loss of generality (w.l.o.g.), we assume $\epsilon_r = 1$ in the sequel.
\subsection{Data injection attack scenario}\label{Attack scenario:sec}
In the system described in \eqref{P}-\eqref{D}, we consider an adversary injecting false data into the sensors or actuators of the plant but not both. Next, we discuss the resources the adversary has access to.
\subsubsection{Disruption and disclosure resources}\label{disclosure:sec} 
The adversary can access (eavesdrop) the control or sensor channels and can inject data. This is represented by
\[ \begin{bmatrix}
\tilde{u}[k]\\
\tilde{y}[k]
\end{bmatrix} 
=
\begin{bmatrix}
{u}[k]\\
{y}[k]
\end{bmatrix} 
+
\begin{bmatrix}
B_a\\
D_a
\end{bmatrix} 
a[k], \left[\begin{array}{c|c}
B_a^T & D_a^T
\end{array}\right] \triangleq \left[\begin{array}{c|c}
E_a^T & 0\\
0^T & F_a^T
\end{array}\right]\]  
where $a[k] \in \mathbb{R}^{n_a}$ is the attack signal injected by the adversary. The matrix $E_a (F_a)$ is a diagonal matrix with $E_a(i,i)=1 \;(F_a(i,i)=1)$, if the actuator (sensor) channel $i$ is under attack and $0$ otherwise.

\subsubsection{System knowledge} 
In general, the system operator knows about the parameters of the controller and detector as s/he is the one who design it. However, it is not known to the adversary. To this end, we consider a realistic adversary whose system knowledge is bounded. We defined this adversary as bounded-rational. Henceforth, we refer to the bounded-rational adversary as a rational adversary.
\begin{definition}[Rational adversary]\label{RA}
An adversary is defined to be rational if it knows all the matrices of \eqref{P}-\eqref{D} but with bounded uncertainty. $\hfill\triangleleft$
\end{definition}

That is, in contrary to the operator, a rational adversary has uncertainty also in the system matrices of the detector and the controller ($A,B,C,C_J,D_J,A_c,B_c,C_c,D_c,A_e,B_e,K_e,C_e,D_e,$ and $E_e$). Defining ${x}[k] \triangleq [ x_p[k]^T \; z[k]^T \; s[k]^T]^T$, the closed-loop system under attack with the performance output and detection output as system outputs becomes
\begin{equation}\label{system:CL:uncertain}
    % \mathcal{P}_{cl}:\left\{
                \begin{array}{ll}
                            {x}[k+1] &= {A}_{cl}^{\Delta}{x}[k] + {B}_{cl}^{\Delta}a[k]\\
                            y_p[k] &= {C}_p^{\Delta}{x}[k] + {D}_p^{\Delta} a[k]\\
                            y_r[k] &= {C}_r^{\Delta}{x}[k] + {D}_r^{\Delta} a[k],\\
                \end{array}
                % \right.
\end{equation}
where the nominal matrices are given by $\left[ \begin{array}{c|c}
A_{cl} & B_{cl}
\end{array}\right] \triangleq $
\[\left[
\begin{array}{c|c}
\begin{matrix}
    A+BD_cC & BC_c & 0\\
    B_cC & A_c & 0\\
    (B_eD_c +K_e)C & B_eC_c & A_e
    \end{matrix} & \begin{matrix}
    BB_a + BD_cD_a \\ B_cD_a \\ (B_eD_c+K_e)D_a 
    \end{matrix}
\end{array}\right]
\]
\begin{align*}
    {C}_p &\triangleq \begin{bmatrix}
    C_J+D_JD_cC & D_JC_c & 0
    \end{bmatrix},\quad {D}_p \triangleq D_J(D_cD_a+B_a),\\
    {C}_r &\triangleq \begin{bmatrix}
    (D_eD_c + E_e)C & D_eC_c & C_e
    \end{bmatrix}, {D}_r \triangleq (D_eD_c +E_e)D_a.
\end{align*}

Although the uncertainty is present in the adversarial system knowledge, the bounds of the uncertainty is chosen by the defender based on the his/her belief of the adversaries resources.  Next, we assume the following for clarity. 
\begin{assumption}\label{assume_stable}
The closed-loop system \eqref{system:CL:uncertain} is stable $ \forall \;\delta \in \Omega$.$\hfill\triangleleft$
\end{assumption}
\begin{assumption}\label{Ba}
The input matrix has full column rank i.e., $ \nexists \;s \in \mathbb{R}^{n_a} \neq 0$ such that $B_{cl}^\Delta s =0$.$\hfill\triangleleft$
\end{assumption}
\subsubsection{Attack goals and constraints.}
Given the resources the adversary has access to, the adversary aims at disrupting the system's behavior while staying stealthy. The system disruption is evaluated by the increase in energy of the performance output whereas, the adversary is stealthy if the energy of the detection output is below a predefined threshold $\epsilon_r=1$. 
\begin{definition}[Stealthy attack]
An attack vector $a$ is said to be stealthy if it satisfies $||y_r||_{\ell_2}^2 \leq 1$.$\hfill\triangleleft$
\end{definition}
We discuss the attack policy for a deterministic system next.

\subsection{Optimal attack policy for the nominal system}

From the previous discussions, it can be understood that the goal of the adversary is to maximize the performance cost while staying undetected. When the system \eqref{system:CL:uncertain} is deterministic, i.e., $\Omega=\emptyset$, \cite{teixeira2015strategic} formulates the attack policy of the adversary as the following non-convex optimization problem
\begin{equation}\label{opti:o2o:uncertain}
\begin{aligned}
||\Sigma||_{\ell_{2e},y_p \leftarrow y_r}^2 \triangleq \sup_{a \in \ell_{2e}} \; & ||y_p||^2_{\ell_{2}} \\
\textrm{s.t.} \quad & ||y_r||_{\ell_2}^2 \leq 1,\; x[0]=0,\;x[\infty]=0,
\end{aligned}
\end{equation}
where $||\Sigma||_{\ell_{2e},y_p \leftarrow y_r}^2$ represents the $OOG$ that characterizes the disruption caused by the attack signal $a$. In \eqref{opti:o2o:uncertain}, the constraint $x[0]=0$ is introduced because the system is assumed to be at equilibrium before the attack. 

\begin{assumption}\label{equil_begin}
The closed-loop system \eqref{system:CL:uncertain} is at equilibrium $x[0]=0$ before the attack commences.$\hfill\triangleleft$
\end{assumption}

We assume that the adversary has finite energy (similar to $H_{\infty}/H_{\_}$ approaches) and thus attacks the system for a sufficiently long but finite amount of time, say $T$. Although this time $T$ is unknown \textit{a priori}, it can be ensured that $ a[\infty] \triangleq \lim_{k\to \infty} a[k] = 0$ by setting $x[\infty] \triangleq \lim_{k\to \infty} x[k] = 0$.
\begin{lemma}\label{vanish_lemma}
If $x[\infty]=0$, then it holds that $a[\infty]=0$.
\end{lemma}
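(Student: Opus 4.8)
The plan is to exploit the closed-loop state recursion together with the full-column-rank condition on the input matrix (Assumption \ref{Ba}). The starting point is the state equation ${x}[k+1] = {A}_{cl}^{\Delta}{x}[k] + {B}_{cl}^{\Delta}a[k]$ from \eqref{system:CL:uncertain}, which I would rearrange to isolate the forcing term as ${B}_{cl}^{\Delta}a[k] = {x}[k+1] - {A}_{cl}^{\Delta}{x}[k]$.

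First I would take the limit $k\to\infty$ on the right-hand side. Since $x[\infty]=0$ by hypothesis, both $\lim_{k\to\infty}{x}[k+1]$ and $\lim_{k\to\infty}{x}[k]$ exist and equal zero, so the right-hand side converges; because the matrices ${A}_{cl}^{\Delta},{B}_{cl}^{\Delta}$ are fixed (they depend only on the realized $\delta\in\Omega$, not on $k$) they pass through the limit, giving $\lim_{k\to\infty}{B}_{cl}^{\Delta}a[k] = {0} - {A}_{cl}^{\Delta}\cdot {0} = 0$.

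Second, I would invoke Assumption \ref{Ba}: since ${B}_{cl}^{\Delta}$ has full column rank, it admits a left inverse $({B}_{cl}^{\Delta})^{\dagger} \triangleq (({B}_{cl}^{\Delta})^{T}{B}_{cl}^{\Delta})^{-1}({B}_{cl}^{\Delta})^{T}$, so that $a[k] = ({B}_{cl}^{\Delta})^{\dagger}{B}_{cl}^{\Delta}a[k]$ for every $k$. Applying the (bounded, linear) map $({B}_{cl}^{\Delta})^{\dagger}$ to the convergent sequence ${B}_{cl}^{\Delta}a[k]$ then shows that $a[\infty]=\lim_{k\to\infty}a[k]$ exists and equals $({B}_{cl}^{\Delta})^{\dagger}\cdot 0 = 0$, which is the claim.

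The argument is short, and the only point requiring care is that $a[\infty]$ must be \emph{shown} to exist rather than merely assumed; this is exactly what the left-inverse step delivers, proving existence and the value zero simultaneously. Equivalently one could argue by contradiction, noting that $a[\infty]\neq 0$ would force ${B}_{cl}^{\Delta}a[\infty]\neq 0$ by Assumption \ref{Ba} and contradict $\lim_{k\to\infty}{B}_{cl}^{\Delta}a[k]=0$; the mild obstacle in that phrasing is that it presupposes the existence of $\lim_{k\to\infty}a[k]$, so I would favour the constructive left-inverse route.
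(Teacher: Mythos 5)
Your proposal is correct and follows essentially the same route as the paper: rearrange the state recursion of \eqref{system:CL:uncertain} to isolate $B_{cl}a[k]$, let $x[\infty]=0$ kill the right-hand side, and invoke Assumption \ref{Ba} to conclude $a[\infty]=0$. The one place you genuinely improve on the paper is the point you flagged yourself: the paper's proof writes $\lim_{k\to\infty}\Vert B_{cl}a[k]\Vert = \Vert B_{cl}a[\infty]\Vert$, which tacitly presupposes that $\lim_{k\to\infty}a[k]$ exists before anything has established this; your left-inverse step --- applying the bounded linear map $\bigl(({B}_{cl}^{\Delta})^{T}{B}_{cl}^{\Delta}\bigr)^{-1}({B}_{cl}^{\Delta})^{T}$ to the convergent sequence ${B}_{cl}^{\Delta}a[k]$ --- proves existence and the value zero in one stroke, closing that gap. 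So your argument is not merely equivalent but strictly tighter; the paper's version buys brevity at the cost of the unstated existence assumption, while yours is the fully rigorous form of the same idea.
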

\begin{proof}
If $x[\infty]=0$, it follows that 
\begin{equation}
    \lim_{k\to \infty} \Vert x[k+1] - A_{cl}x[k]\Vert = 0 =     \lim_{k\to \infty} \Vert B_{cl}a[k] \Vert = \Vert B_{cl}a[\infty]\Vert.
\end{equation}
From \textit{Assumption \ref{Ba}}, we known that $B_{cl}a[\infty] = 0 $ holds only when $a[\infty]=0$. This concludes the proof.
\end{proof}

For the above discussed reason, the constraint $x[\infty]=0$ is introduced in the optimization problem \eqref{opti:o2o:uncertain}. In this article, we focus on attacks which satisfy the constraint $x[\infty]=0$, and define them as state-vanishing attacks.
\begin{definition}[State-vanishing attack]\label{vanish_attack_def}
An attack vector $a$ is defined to be state-vanishing if applying $a$ to \eqref{system:CL:uncertain} generates the state vector $x$ which satisfies $\lim_{k \to \infty}x[k] = 0$.$\hfill\triangleleft$
\end{definition}

In the literature, such characterization of impact of stealthy attacks \eqref{opti:o2o:uncertain} has only been studied for deterministic systems. The remainder of this article is focused on discussing methods to quantify the impact in terms of risk on the uncertain system \eqref{system:CL:uncertain}.

The concept of risk is conventionally adopted to address decision-making in an uncertain environment \cite[Chapter 14]{ferrari2021safety}. Since we also focus on decision-making (impact assessment) under uncertainty, it is useful to adopt tools from the risk management framework. Thus, before introducing the problem formulation, a brief introduction to risk management and risk metrics is provided as it helps in the problem formulation.

\section{Problem formulation}\label{PF}

The framework of risk management can help the system operator answer the following (but not limited) questions: $(i)$ Which components of the system are critical to the operation of the system? $(ii)$ What disruption can be expected from attacks and $(iii)$ Which resources should be protected and how? Thus, to use the risk management framework for the benefit of the system operator (to estimate system disruption $(ii)$), we will focus on risk and its consequences. To quantify the risks of data injection attacks on an uncertain system, we start by defining an impact random variable a function of the system uncertainty and the attack vector.

\begin{definition}[Impact random variable]\label{RV}
Let the random variable $X^A(\cdot)$ be defined as
\begin{equation}\label{newimpact}
\begin{aligned}
X^A(a,\delta) \triangleq & \;\Vert y_p(\delta)\Vert_{\ell_2}^2 \times  \mathbb{I}\bigg( \Vert y_r(\delta)\Vert_{\ell_2}^2 \leq 1,x({\delta})[\infty]=0 \bigg)
\end{aligned}
\end{equation}
where $X^A(\cdot)$ is the impact caused on the system \eqref{system:CL:uncertain} with uncertainty $\delta \in \Omega$ by the attack vector $a \in  \ell_{2e}$, $\mathbb{I}$ is the indicator function,  $y_p(\delta), y_r(\delta)$ and $x({\delta})$ are the performance, residue output and state of the system with the isolated uncertainty $\delta$. Here, the signals $y_p(\delta), y_r(\delta)$ and $x(\delta)$ are also functions of the attack vector $a$.$\hfill\triangleleft$
\end{definition}

With the random variable defined in \textit{Definition \ref{RV}}, we next formulate the risk assessment problem in for a rational adversary.  Consider the data injection attack scenario where only the bounds of the parametric uncertainty set $\Omega$ is known to the adversary. Although, the adversary can determine the attack vector which maximizes the expected loss over the entire uncertainty set $\Omega$ i.e., the adversary maximizes the function
\begin{equation}
\begin{aligned}
 ||\bar{\Sigma}(\delta)||_{\ell_{2e},y_p \leftarrow y_r}^2 \triangleq &\; \mathbb{E}_{\Omega}\left\{X^A(a,\delta)\right\}.
\end{aligned}
\end{equation}

This setup is common in game-theoretic approaches \cite{lye2005game} where the players do not know the strategy of the other players and thus play by maximizing its expected return over all the strategies of the other players. Similarly, since the adversary has limited system knowledge, s/he chooses an attack policy which maximizes the expected loss of the system operator over the set $\Omega$ whilst remaining stealthy. This strategy of maximum expected loss can be defined as follows.
\begin{definition}[Maximum Expected Loss]\label{expect_loss_def}
The maximum expected loss ($\mathbb{EL}$) associated with the impact-random variable $X^A(\cdot)$, defined in \eqref{newimpact}, is defined as
\begin{align}
\mathbb{EL}[X^A] \triangleq \sup_{a \in \ell_{2e}} \; \mathbb{E}_{\Omega} \left\{X^A(\delta,a) \right\},
\end{align}
where $X^A(\delta,a)$ is the loss on scenario $\delta$ and $\mathbb{E}_{\Omega}$ represents the expectation operator over the set $\Omega$.$\hfill\triangleleft$
%  $\mathbb{P}[i]$ is the probability of scenario $i$ 
\end{definition}

Thus by determining the attack vector that solves for maximal expected loss, one can ensure that the attack vector is stealthy with respect to all uncertainties whilst maximizing the performance loss. Using \textit{Definition \ref{expect_loss_def}}, the risk associated with the impact caused by a bounded-\textbf{R}ational \textbf{A}dversary can be characterized as

\begin{equation}\label{w1}
\begin{aligned}
\gamma_{RA}  &\triangleq \sup_{a \in \ell_{2e}} \mathbb{E}_{\Omega} (X^A(a,\delta)).
\end{aligned}
\end{equation}

% To compare the problem formulations \eqref{uncertain:system} and \eqref{w1}, in \eqref{uncertain:system} the adversary is omniscient, whereas in \eqref{w1} the adversary is rational. In the former, the value of \eqref{uncertain:system} maps back to the VaR of the system operator whereas in the latter the value of the optimization problem \eqref{w1} maps back to the $\mathbb{EL}$ of the system operator. The former provides a worst-case value of risk whereas the latter provides a realistic value of risk.

Since the operator $\mathbb{E}$ in \eqref{w1} operate over the continuous space $\Omega$, \eqref{w1} is computationally intensive or in general NP-hard \cite[Section 3]{ben1998robust}. Besides, the problem is also non-convex. In the remainder of this article, we discuss methods to solve the optimization problem approximately and efficiently.

% \section{Risk assessment for an omniscient adversary}\label{Robust o2o}

% \input{3_Omniscient}

\section{Risk assessment for a bounded-rational adversary}\label{rational}

In this section, we focus on describing a scenario-based approach to the optimization problem \eqref{w1}.
% where the adversary is rational.
\subsection{Approximating the uncertainty set }
To recall, we are interested in determining the maximum expected loss associated with the impact caused by a rational adversary. Unfortunately, this problem is computationally intensive or in general NP-hard. Thus, as a first step toward solving \eqref{w1}, we approximate the objective function in \textit{Lemma \ref{Expectation_sample_lemma}}.
\begin{lemma}\label{Expectation_sample_lemma}
Let $\delta_i$ be sampled uncertainties from $\Omega$. Let us define
\begin{equation}
    \hat{\mathbb{E}}_{\Omega_{N_2}}(X^A(a,\delta)) \triangleq \frac{1}{N_2}\sum_{i=1}^{N_2} X^A(a_i,\delta_i).
\end{equation}
Then, it holds that $\lim_{N_2 \to \infty} \hat{\mathbb{E}}_{\Omega_{N_2}}(X^A(a_i,\delta_i)) = \mathbb{E}_{\Omega}(X^A(a,\delta)).$
\end{lemma}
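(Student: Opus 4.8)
The plan is to recognize the claim as a direct instance of the Strong Law of Large Numbers (SLLN), applied for a fixed decision variable. First I would fix the attack vector and set $g(\delta) \triangleq X^A(a,\delta)$, regarding it as a real-valued measurable function on the probability space $(\Omega,\mathcal{D}_a,\mathbf{P})$. By the sampling model described in the \emph{Notation}, the scenarios $\delta_1,\delta_2,\dots$ are drawn independently from $(\Omega,\mathcal{D}_a,\mathbf{P})$; consequently the summands $g(\delta_1),g(\delta_2),\dots$ form a sequence of independent and identically distributed random variables, each with mean $\mathbb{E}_{\Omega}(X^A(a,\delta))$. The empirical average $\hat{\mathbb{E}}_{\Omega_{N_2}}(X^A(a,\delta))$ is then exactly the sample mean of the first $N_2$ of these i.i.d.\ variables.

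The second step is to verify the integrability hypothesis required by the SLLN, namely $\mathbb{E}_{\Omega}|g(\delta)| < \infty$. Here I would use that $X^A$ is the product of the bounded indicator $\mathbb{I}(\cdot)$, which takes values in $\{0,1\}$, and the performance energy $\Vert y_p(\delta)\Vert_{\ell_2}^2$. On the event where the indicator is nonzero, the stealthiness constraint $\Vert y_r(\delta)\Vert_{\ell_2}^2 \le 1$ is active, and together with the stability of \eqref{system:CL:uncertain} for every $\delta \in \Omega$ (\textit{Assumption \ref{assume_stable}}) and the compactness of $\Omega$, this bounds $\Vert y_p(\delta)\Vert_{\ell_2}^2$ by some finite constant $M$ for the fixed $a$. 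Hence $0 \le g(\delta) \le M$ almost surely, so $g$ is integrable and its expectation is well defined and finite.

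With integrability established, the final step is to invoke Kolmogorov's SLLN, which gives
\begin{equation}
\frac{1}{N_2}\sum_{i=1}^{N_2} g(\delta_i) \;\longrightarrow\; \mathbb{E}_{\Omega}\big(g(\delta)\big) \quad \text{as } N_2 \to \infty,
\end{equation}
almost surely with respect to the underlying product measure on the scenario sequences. Substituting $g(\delta) = X^A(a,\delta)$ then yields $\lim_{N_2 \to \infty}\hat{\mathbb{E}}_{\Omega_{N_2}}(X^A(a,\delta)) = \mathbb{E}_{\Omega}(X^A(a,\delta))$, which is the assertion.

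The step I expect to be the main obstacle is the integrability/boundedness argument of the second paragraph: one must ensure that the performance energy remains finite on the stealthy, state-vanishing event, which is exactly the question of whether the $OOG$ is bounded and is treated separately later in the paper. A secondary point worth clarifying is the indexing $a_i$ versus a fixed $a$ in the statement: the i.i.d.\ structure, and hence the law of large numbers, applies for a fixed attack policy $a$, so I would read the summand as $X^A(a,\delta_i)$ and note that the convergence then holds for each admissible $a$.
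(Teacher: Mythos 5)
Your proposal is correct and takes essentially the same route as the paper: the paper's entire proof is a one-line citation of \cite[Theorem 7.2]{tempo2012randomized}, which is exactly the law of large numbers for Monte Carlo empirical means that you invoke, and your reading of the summand as $X^A(a,\delta_i)$ for a fixed attack $a$ is the intended interpretation. Your explicit verification of the i.i.d.\ structure and the integrability hypothesis (including the honest caveat that finiteness of $\Vert y_p(\delta)\Vert_{\ell_2}^2$ on the stealthy, state-vanishing event is really the OOG-boundedness question settled later in \textit{Lemma \ref{Bound_RA}}) supplies detail the paper omits, rather than departing from its argument.
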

\begin{proof}
The proof follows from applying \cite[{Theorem 7.2}]{tempo2012randomized} to approximate the expectation operator in \eqref{w1}.
\end{proof}

\textit{Lemma \ref{Expectation_sample_lemma}} states that the continuous set $\Omega$ can be approximated by a discrete set $\Omega_{N_2}$ of cardinality $N_2$. The approximation becomes more accurate as $N_2 \to \infty$. This approximation introduces a curse of dimensionality since, to obtain a good estimate of the risk and to obtain a well feasible attack vector, the required number of samples is too large for practical implementation. To circumvent this practical issue, we next show that an attack vector obtained by solving \eqref{w1} with a discrete uncertainty set as mentioned in \textit{Lemma \ref{Expectation_sample_lemma}} is partially feasible to the original optimization problem \eqref{w1} with a continuous uncertainty set. 

It might not be immediately apparent that the notion of feasibility applies to \eqref{w1} since there are no external constraints present. Thus, we begin by simplifying the optimization problem \eqref{w1}.
% Thus an upper bound on the number of samples required to approximate the set $\Omega$ would solve the practical issue. But in general, the upper bound is unknown apriori. 
\begin{lemma}\label{pp31}
The optimization problem \eqref{w1} is equivalent to the optimization problem 
\begin{equation}\label{pp3}
\begin{aligned}
 \sup_{a \in \ell_{2e},\beta \in [0,\;1]} \; & {\mathbb{E}_{\Omega}} \left\{||y_p(\delta)||_{\ell_2}^2\; \vert \; \begin{pmatrix}
||y_r(\delta)||_{\ell_2}^2 \leq1\\
x(\delta)[\infty]=0
\end{pmatrix} \right\} (1-\beta) \\
\textrm{s.t.} \quad & {\mathbb{P}_{\Omega}}
\begin{pmatrix}
||y_r(\delta)||_{\ell_2}^2 \leq1\\
x(\delta)[\infty]=0
\end{pmatrix}\geq 1-\beta.
\end{aligned}
\end{equation}
\end{lemma}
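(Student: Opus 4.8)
The plan is to recognize that the objective of \eqref{w1} is the unconditional expectation of a product between the nonnegative quantity $\|y_p(\delta)\|_{\ell_2}^2$ and the indicator in \eqref{newimpact}, and then to expose the probability of the stealthy/state-vanishing event as a separate multiplicative factor via the law of total expectation. Concretely, I would denote by $E$ the event $\{\|y_r(\delta)\|_{\ell_2}^2 \leq 1,\; x(\delta)[\infty]=0\}$ appearing inside the indicator. Since $\|y_p(\delta)\|_{\ell_2}^2 \geq 0$, the first step is to establish the identity
\begin{equation}
\mathbb{E}_\Omega(X^A(a,\delta)) = \mathbb{E}_\Omega\!\left\{\|y_p(\delta)\|_{\ell_2}^2 \;\middle|\; E \right\}\,\mathbb{P}_\Omega(E),
\end{equation}
which is merely the definition of conditional expectation, adopting the convention that the right-hand side is $0$ whenever $\mathbb{P}_\Omega(E)=0$ (consistent with the left-hand side vanishing there as well). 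This convention disposes of the degenerate case where conditioning is otherwise ill-defined.

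Next I would introduce the auxiliary variable $\beta$ through the substitution $1-\beta = \mathbb{P}_\Omega(E)$, so that for each fixed $a$ the objective of \eqref{w1} equals $\mathbb{E}_\Omega\{\|y_p(\delta)\|_{\ell_2}^2 \mid E\}(1-\beta)$, which is exactly the objective appearing in \eqref{pp3}. The remaining task is then to show that enlarging the problem by letting $\beta$ range freely over $[0,1]$, subject only to the constraint $\mathbb{P}_\Omega(E)\geq 1-\beta$, leaves the optimal value unchanged.

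For this last step I would fix an arbitrary $a \in \ell_{2e}$ and analyze the inner maximization over $\beta$. The constraint $\mathbb{P}_\Omega(E)\geq 1-\beta$ is equivalent to $\beta \geq 1-\mathbb{P}_\Omega(E)$, so the feasible interval is $\beta \in [\,1-\mathbb{P}_\Omega(E),\,1\,]$. Because the conditional-expectation factor is nonnegative and independent of $\beta$, the map $\beta \mapsto \mathbb{E}_\Omega\{\|y_p(\delta)\|_{\ell_2}^2\mid E\}(1-\beta)$ is nonincreasing; hence its maximum over the feasible interval is attained at the left endpoint $\beta^\star = 1-\mathbb{P}_\Omega(E)$, where $1-\beta^\star = \mathbb{P}_\Omega(E)$ and the objective collapses back to $\mathbb{E}_\Omega(X^A(a,\delta))$ by the identity above. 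Taking the supremum over $a$ on both sides then delivers the equality of \eqref{w1} and \eqref{pp3}.

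The main obstacle I anticipate lies not in the algebra but in the bookkeeping around degenerate scenarios: stating the conditional-expectation identity with the correct convention when $\mathbb{P}_\Omega(E)=0$, and verifying that the monotonicity argument genuinely pins the optimizer to the boundary $\beta^\star = 1-\mathbb{P}_\Omega(E)$ so that the extra decision variable $\beta$ introduces no slack into the supremum. Once monotonicity in $\beta$ is in hand, the equivalence is immediate.
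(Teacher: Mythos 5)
Your proposal is correct and follows essentially the same route as the paper: expand the indicator inside $X^A(a,\delta)$ to write $\mathbb{E}_{\Omega}(X^A(a,\delta))$ as the conditional expectation of $\|y_p(\delta)\|_{\ell_2}^2$ given the stealthy/state-vanishing event times the probability of that event, and then identify \eqref{pp3} as the same quantity with $1-\beta$ playing the role of that probability. If anything, your monotonicity argument pinning the optimizer to $\beta^\star = 1-\mathbb{P}_{\Omega}(\cdot)$ (together with the convention for the zero-probability case) makes explicit the final step that the paper's proof leaves implicit, since the paper simply asserts that the product form ``can be rewritten as \eqref{pp3}.''
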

\begin{proof}
Consider the function $X^A(a,\delta)$ in \eqref{newimpact}. By expanding its indicator function, we can write $\mathbb{E}_{\Omega}(X^A(a,\delta))$ as
\begin{align}\label{w3}
\nonumber & {\mathbb{E}_{\Omega}} \left\{ ||y_p(\delta)||_{\ell_2}^2\; \vert \begin{pmatrix}
||y_r(\delta)||_{\ell_2}^2 \leq1\\
x(\delta)[\infty]=0
\end{pmatrix} \right \}{\mathbb{P}_{\Omega}}\begin{pmatrix}
||y_r(\delta)||_{\ell_2}^2 \leq1\\
x(\delta)[\infty]=0
\end{pmatrix}
\end{align}
Using the above definition in \eqref{w1}, we obtain $\gamma_{RA}$ as
\begin{equation}
\sup_{a \in \ell_{2e}} {\mathbb{E}_{\Omega}} \left\{ ||y_p(\delta)||_{\ell_2}^2\; \vert \;\begin{pmatrix}
||y_r(\delta)||_{\ell_2}^2 \leq1\\
x(\delta)[\infty]=0
\end{pmatrix} \right \}{\mathbb{P}_{\Omega}}\begin{pmatrix}
||y_r(\delta)||_{\ell_2}^2 \leq1\\
x(\delta)[\infty]=0
\end{pmatrix}
\end{equation}
which can be rewritten as \eqref{pp3}. This concludes the proof.
\end{proof}

\textit{Lemma \ref{pp31}} uncovers the constraints present in the optimization problem \eqref{w1}. We can now discuss the notion of feasibility in regards to the optimization problem \eqref{pp3}. So, we continue by simplifying \eqref{pp3} as follows. In reality, $\beta$ represents the fraction of the uncertainty set with respect to which the adversary is not stealthy. Let us assume that the adversary is injecting a maximally robust attack, i.e., the attack vector is stealthy w.r.t. all uncertainties. Thus we could set $\beta =0$. Motivated by the above argument, we rewrite \eqref{pp3} as
\begin{equation}\label{fi1}
\begin{aligned}
\sup_{a \in \ell_{2e}} \; & {\mathbb{E}_{\Omega}} \left\{||y_p(\delta)||_{\ell_2}^2\; \vert \;\begin{pmatrix}
||y_r(\delta)||_{\ell_2}^2 \leq1\\
x(\delta)[\infty]=0
\end{pmatrix}\right\} \\
\textrm{s.t.} \quad & 
\begin{pmatrix}
||y_r(\delta)||_{\ell_2}^2 \leq1\\
x(\delta)[\infty]=0
\end{pmatrix} \forall \delta \in \Omega
\end{aligned}
\end{equation}
Recalling the approximation result in \textit{Lemma \ref{Expectation_sample_lemma}}, assume that $\Omega$ is replaced with a discrete uncertainty set $\Omega_{N}$, so that \eqref{fi1} is approximated by
\begin{equation}\label{uncertain:adversary}
\begin{aligned}
\gamma_{RA}(N_2) = \sup_{a \in \ell_{2e}} & \hat{\mathbb{E}}_{\Omega_{N_2}} \left\{||y_p(\delta)||_{\ell_2}^2\; \vert \;\begin{pmatrix}
||y_r(\delta)||_{\ell_2}^2 \leq1\\
x(\delta)[\infty]=0
\end{pmatrix}\right\} \\
\textrm{s.t.} & 
\begin{pmatrix}
||y_r(\delta)||_{\ell_2}^2 \leq1\\
x(\delta)[\infty]=0
\end{pmatrix} \forall \delta \in \Omega_{N_2}.
\end{aligned}
\end{equation}

Let the resulting optimal attack vector be denoted by $a_{N_2}^*$. Then the following theorem provides \textit{a posteriori} results on the feasibility of the attack vector $a_{N_2}^*$ to \eqref{fi1}.

\begin{theorem}\label{Thm_sampling}
Let the number of samples $N_2$ and the confidence level $\lambda \in (0,1)$ be predefined constants. Define $\epsilon(\cdot)$ such that 
\begin{equation}\label{sample:constraint2}
\epsilon(N_2) = 1 \;,\; \sum_{k=0}^{N_2-1} {N_2\choose k}(1 - \epsilon(k))^{N_2-k} = \lambda.
\end{equation}
Let $s_{N_2}^*$ represent the cardinality of the support subsample for $(\delta_1,\dots,\delta_{N_2})$ (see \cite[\textit{Definition 2}]{campi2018general}). Then it holds that 
\begin{equation}
\mathbb{P}^{N_2}\{ \mathbb{P}_{\Omega}\{\delta \in \Omega \;\vert\; a_{N_2}^* \notin \Theta\}> \epsilon(s_{N_2}^*)\}\leq \lambda,
\end{equation}
where $a_{N_2}^*$ is the argument of the optimization problem \eqref{uncertain:adversary} and $\Theta$ is defined as
\begin{align}\label{theta}
    \Theta &\triangleq \bigcap_{\delta \in \Omega} \Theta_{\delta}, \text{where} \;\Theta_{\delta} \triangleq \left\{ a \in \ell_{2e} \vert 
\begin{pmatrix}
||y_r(\delta)||_{\ell_2}^2 \leq1\\
x(\delta)[\infty]=0
\end{pmatrix}
\right\}
\end{align}
% \[
%  \Theta \triangleq \left\{ a \in \ell_{2e}\; \Big|\;
% \begin{pmatrix}
% ||y_r(\delta)||_{\ell_2}^2 \leq1\\
% x(\delta)[\infty]=0
% \end{pmatrix}, \forall \delta \in \Omega
% \right\}
% \]
\end{theorem}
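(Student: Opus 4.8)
The plan is to recognize that Theorem \ref{Thm_sampling} is an application of the \emph{wait-and-judge} scenario optimization paradigm of Campi and Garatti to the sampled problem \eqref{uncertain:adversary}. The key conceptual step is to read \eqref{uncertain:adversary} as a scenario program in which each sampled uncertainty $\delta_i$ contributes the constraint $a \in \Theta_{\delta_i}$, so that the feasible set of the sampled problem is $\bigcap_{i=1}^{N_2} \Theta_{\delta_i}$, whereas the feasible set of the original robust problem \eqref{fi1} is $\Theta = \bigcap_{\delta \in \Omega} \Theta_{\delta}$ as defined in \eqref{theta}. The quantity $\mathbb{P}_{\Omega}\{\delta \in \Omega \mid a_{N_2}^* \notin \Theta_\delta\}$ is then precisely the \emph{violation probability} of the scenario solution $a_{N_2}^*$, i.e.\ the probability mass of uncertainties whose constraint the computed attack vector fails to satisfy.

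First I would verify that the abstract hypotheses required by the general theory hold in our setting: the decision variable $a$ lives in $\ell_{2e}$, the constraints indexed by $\delta$ are of the form $a \in \Theta_\delta$, and the samples $(\delta_1,\dots,\delta_{N_2})$ are drawn i.i.d.\ from $(\Omega,\mathcal{D}_a,\mathbf{P})$ so that the product measure is $\mathbf{P}^{N_2}$, matching the probability space set up in the Notation section. Next I would introduce the notion of the \emph{support subsample} and its cardinality $s_{N_2}^*$ exactly as in \cite[\textit{Definition 2}]{campi2018general}, which quantifies how many of the sampled constraints are genuinely active at the optimizer $a_{N_2}^*$. With these identifications in place, the wait-and-judge bound states that for any $\epsilon(\cdot)$ satisfying the polynomial identity \eqref{sample:constraint2}, the violation probability exceeds $\epsilon(s_{N_2}^*)$ with probability at most $\lambda$ under $\mathbf{P}^{N_2}$; substituting our violation probability and our $\Theta_\delta$ yields exactly the claimed inequality.

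The steps I would carry out in order are therefore: (1) cast \eqref{uncertain:adversary} as a scenario program with constraint sets $\Theta_\delta$; (2) identify the event $\{a_{N_2}^* \notin \Theta_\delta\}$ with a constraint violation and its probability with the violation probability $V(a_{N_2}^*)$; (3) invoke \cite[\textit{Definition 2}]{campi2018general} to fix the meaning of $s_{N_2}^*$; and (4) apply the main wait-and-judge theorem of \cite{campi2018general} with the $\epsilon(\cdot)$ defined through \eqref{sample:constraint2} to obtain the bound. I expect the main obstacle to be a technical one rather than a computational one: the classical scenario guarantees are proved under a convexity (or at least a uniqueness/non-degeneracy) assumption on the optimization program, yet \eqref{uncertain:adversary} is non-convex. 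The delicate point is thus to confirm that the \emph{wait-and-judge} result applies in the general, possibly non-convex form — which is exactly why \cite{campi2018general} (rather than the original convex scenario theorem) is cited — and to check that the mild regularity hypotheses of that general theorem (existence and essential uniqueness of the optimizer, or a suitable tie-breaking rule) are met for our attack-synthesis problem. Once that applicability is secured, the remainder of the argument is a direct translation of the cited theorem into the notation of \eqref{theta} and \eqref{sample:constraint2}.
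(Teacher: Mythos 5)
Your proposal is correct and takes essentially the same route as the paper: the paper's proof likewise rewrites the robust problem \eqref{fi1} in minimization form, identifies the constraint sets $\Theta_{\delta}$ and the feasibility set $\Theta$ of \eqref{theta}, recognizes \eqref{uncertain:adversary} as the associated scenario program, and then directly invokes \cite[Theorem 1]{campi2018general} (the general non-convex scenario theorem) with $\epsilon(\cdot)$ from \eqref{sample:constraint2} and $s_{N_2}^*$ from \cite[Definition 2]{campi2018general}. Your explicit flagging of the non-convexity issue, and of the regularity hypotheses of the general theorem, is a point the paper passes over silently, but it does not change the argument.
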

\begin{proof}
See Appendix \ref{Main_thm}.
\end{proof}

In words, \textit{Theorem \ref{Thm_sampling}} states that the attack vector obtained by solving \eqref{uncertain:adversary} is stealthy and state-vanishing for all the closed-loop system of the form \eqref{system:CL:uncertain} with uncertainties belonging to the set $\Omega$ except for the fraction $\epsilon(s_{N_2}^*)$ of $\Omega$. It also states that the accuracy ($\epsilon(\cdot)$) and confidence ($\lambda$) of the solution are independent of the distribution. This result is a direct consequence of \cite[\textit{Theorem 1}]{campi2018general}.

In conclusion, it follows that \eqref{fi1} can be solved approximately with a discrete set $\Omega_{N_2}$ of arbitrary but bounded cardinality. Thus, the next section will focus on solving the optimization problem \eqref{uncertain:adversary}.
\subsection{Risk assessment}
The optimization problem \eqref{uncertain:adversary} has two main disadvantages namely $(i)$ it is a non-convex optimization problem, and $(ii)$ its constraints lie in the infinite-dimensional attack space. To resolve these disadvantages, we adopt S-procedure and dissipative system theory and revisit the optimization problem \eqref{uncertain:adversary} in the theorem below. To begin with, given a sampled uncertainty $\delta_i \in \Omega$, we define $\tilde{\Sigma}_{p,i} \triangleq ({A}_{cl,i}, {B}_{cl,i}, {C}_{p,i}, {D}_{p,i})$ and  $\tilde{\Sigma}_{r,i} \triangleq ({A}_{cl,i}, {B}_{cl,i}, {C}_{r,i}, {D}_{r,i})$ with $y_p(\delta_i)=y_{pi},y_r(\delta_i)=y_{ri}$ and $x(\delta_i)=x_{i}$ as the outputs and states of $\tilde{\Sigma}_{p,i}$ and  $\tilde{\Sigma}_{r,i}$ correspondingly. Now we are ready to present the main theorem of this section.

\begin{theorem}\label{Thm2}
Let $N_2$ be a predefined constant. The maximum expected loss \eqref{uncertain:adversary} associated with the impact caused by a rational adversary injecting a maximally robust attack vector on \eqref{system:CL:uncertain} can be obtained by solving the convex SDP \footnote{With an abuse of notation, we denote that every element of the vector $\gamma$ is non-negative by $\gamma \geq 0$}
\begin{equation}\label{problem:coupled:final}
\begin{aligned}
\min_{\gamma \geq 0, P = P^T}  & \mathbf{1}^T\begin{bmatrix}
\gamma_1 & \dots & \gamma_{N_2}
\end{bmatrix}^T \\
\textrm{s.t.} \quad &
\begin{bmatrix}
\Bar{A}^TP\Bar{A}-P &\Bar{A}^TP\Bar{B}\\ \Bar{B}^TP\Bar{A}&\Bar{B}^TP\Bar{B}\end{bmatrix}+ \Psi(\gamma) \preceq 0,
\end{aligned}
\end{equation}
where $\Psi(\gamma) \triangleq \frac{1}{N_2}
\begin{bmatrix}
\bar{C}_{p}^T\\
\bar{D}_{p}^T
\end{bmatrix}
\begin{bmatrix}
\bar{C}_{p} & \bar{D}_{p}
\end{bmatrix}
- \begin{bmatrix}
\bar{C}_{r}^T\\
\bar{D}_{r}^T
\end{bmatrix}\Gamma(\gamma)
\begin{bmatrix}
\bar{C}_{r} & \bar{D}_{r}
\end{bmatrix}$,
\begin{align}\label{balaji}
\left[
\begin{array}{c|c}
\bar{A} & \bar{B}\\
\hline
\\[\dimexpr-\normalbaselineskip+2pt] \bar{C}_p & \bar{D}_p\\
\hline
\\[\dimexpr-\normalbaselineskip+2pt] \bar{C}_r & \bar{D}_r
\end{array}
\right] = 
\left[ 
\begin{array}{c|c}
\begin{matrix}
A_{cl,1} & \dots & 0 \\
\vdots & \ddots & \vdots\\
0 & \dots & A_{cl,N_2}
\end{matrix}     & \begin{matrix}
B_{cl,1} \\ \vdots \\ B_{cl,N_2}
\end{matrix} \\
\hline
\\[\dimexpr-\normalbaselineskip+2pt] \begin{matrix}
C_{p,1} & \dots & 0\\
\vdots  & \ddots & \vdots \\
0 & \dots & C_{p,N_2}
\end{matrix} & \begin{matrix}
D_{p,1}^T \\ \vdots \\ D_{p,N_2}^T
\end{matrix}\\
\hline
\\[\dimexpr-\normalbaselineskip+2pt] \begin{matrix}
C_{r,1} & \dots & 0\\
\vdots  & \ddots & \vdots \\
0 & \dots & C_{r,N_2}
\end{matrix} & \begin{matrix}
D_{r,1}^T \\ \vdots \\ D_{r,N_2}^T
\end{matrix}
\end{array}
\right],
\end{align} 
and $\Gamma(\gamma) =I_{n_r} \bigotimes \text{diag}(\gamma_1,\dots.\gamma_{N_2})$. The dimension of each matrix is given in TABLE \ref{matrix_dimention}.
\begin{table}
\centering
\begin{tabular}{|| m{3em} | m{6em}|| m{3em} | m{6em}||}
 \hline
Matrix & Dimension & Matrix & Dimension  \\
\hline
 $\bar{A}$ &  $n_cN_2 \times n_cN_2$ & $\bar{B}$ & $n_cN_2 \times n_a$\\
 $\bar{C}_p$ & $n_pN_2\times n_cN_2$ & $\bar{D}_p$ & $n_pN_2\times n_a$\\
 $\bar{C}_r$ & $n_rN_2\times n_cN_2$ & $\bar{D}_r$ & $n_rN_2\times n_a$\\
 \hline
\end{tabular}
\caption{Dimension of matrices,$\;n_c\triangleq n_x+n_z+n_s$}
\label{matrix_dimention}
\end{table}
\end{theorem}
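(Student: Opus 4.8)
The plan is to recast the sampled problem \eqref{uncertain:adversary} as an output-to-output gain problem for a single augmented system and then certify its value with a quadratic storage function, mirroring the dissipativity-based treatment of the deterministic $OOG$ in \cite{teixeira2015strategic}. First I would stack the $N_2$ sampled realizations $\tilde{\Sigma}_{p,i},\tilde{\Sigma}_{r,i}$ into the block-diagonal system $(\bar A,\bar B,\bar C_p,\bar D_p,\bar C_r,\bar D_r)$ of \eqref{balaji}, observing that all subsystems are driven by the common attack $a$, so $\bar A$ is block-diagonal while $\bar B$ is column-stacked. With $\bar x\triangleq[x_1^T\;\cdots\;x_{N_2}^T]^T$ and $\bar y_r\triangleq[y_{r1}^T\;\cdots\;y_{rN_2}^T]^T$, the stacked outputs obey $\|\bar y_p\|_{\ell_2}^2=\sum_i\|y_{pi}\|_{\ell_2}^2$. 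By \textit{Lemma \ref{pp31}} (taken with $\beta=0$, i.e. the maximally robust regime) and the constraint in \eqref{uncertain:adversary}, the objective collapses to $\frac{1}{N_2}\|\bar y_p\|_{\ell_2}^2$ subject to the $N_2$ stealthiness constraints $\|y_{ri}\|_{\ell_2}^2\le1$, together with $\bar x[0]=0$ (Assumption \ref{equil_begin}) and $\bar x[\infty]=0$ (Definition \ref{vanish_attack_def}).

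Second, I would relax the $N_2$ quadratic stealthiness constraints with nonnegative multipliers $\gamma_i$ in the spirit of the S-procedure, yielding the per-step supply rate $w[k]\triangleq\frac{1}{N_2}\|\bar y_p[k]\|^2-\bar y_r[k]^T\Gamma(\gamma)\bar y_r[k]=\frac{1}{N_2}\|\bar y_p[k]\|^2-\sum_{i=1}^{N_2}\gamma_i\|y_{ri}[k]\|^2$, where the Kronecker form of $\Gamma(\gamma)$ is precisely what distributes one multiplier per sample. Written as a quadratic form in $[\bar x[k]^T\;a[k]^T]^T$, this supply rate has matrix $\Psi(\gamma)$. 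Introducing the quadratic storage $V(\bar x)=\bar x^T P\bar x$ and expanding $V(\bar x[k+1])-V(\bar x[k])$ along $\bar x[k+1]=\bar A\bar x[k]+\bar Ba[k]$ produces the block quadratic form whose matrix is the leading (storage) term $M(P)$ of the LMI in \eqref{problem:coupled:final}; hence $M(P)+\Psi(\gamma)\preceq0$ is exactly the dissipation inequality $V(\bar x[k+1])-V(\bar x[k])\le-w[k]$ holding for all $\bar x,a$. Summing over $k$ and using $V(\bar x[0])=V(\bar x[\infty])=0$ telescopes to $\frac{1}{N_2}\|\bar y_p\|_{\ell_2}^2\le\sum_i\gamma_i\|y_{ri}\|_{\ell_2}^2\le\mathbf 1^T\gamma$, the final step invoking $\gamma\ge0$ and $\|y_{ri}\|_{\ell_2}^2\le1$. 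Minimising $\mathbf 1^T\gamma$ over feasible $(P,\gamma)$ therefore upper-bounds $\gamma_{RA}(N_2)$ and delivers one inequality. Convexity is then immediate, since $M(P)$ and $\Psi(\gamma)$ are affine in $(P,\gamma)$ and the objective $\mathbf 1^T\gamma$ is linear, so \eqref{problem:coupled:final} is a genuine convex SDP.

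For the reverse inequality — the part I expect to be the crux — I must show the certificate is lossless, so that the SDP optimum is \emph{attained} by an admissible attack rather than merely bounding it. The route I would take is to invoke the dissipativity/KYP characterisation of the $OOG$ from \cite{teixeira2015strategic}: under Assumption \ref{assume_stable} the augmented pair $(\bar A,\bar B)$ is stable, and under Assumption \ref{Ba} each $B_{cl,i}$, hence the column-stacked $\bar B$, is injective, which (as in \textit{Lemma \ref{vanish_lemma}}) guarantees that a worst-case finite-energy, state-vanishing attack exists and closes the gap between the supremum and the storage-certificate infimum. The genuine obstacle is that with $N_2$ \emph{separate} quadratic constraints the generic S-procedure is only sufficient; the block-diagonal structure of $(\bar A,\bar B)$ and the per-sample multipliers $\gamma_i$ encoded in $\Gamma(\gamma)$ are exactly the features I would lean on to recover losslessness and thereby upgrade the upper bound to the claimed equality.
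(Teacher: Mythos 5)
Your first two paragraphs are sound and coincide with the easy half of the paper's argument (its \textit{Step 4}): stacking the $N_2$ realizations into $(\bar A,\bar B,\bar C_p,\bar D_p,\bar C_r,\bar D_r)$, reading off the supply rate encoded by $\Psi(\gamma)$, and telescoping the storage $V(\bar x)=\bar x^TP\bar x$ with $\bar x[0]=\bar x[\infty]=0$ correctly show that any feasible pair $(P,\gamma)$ of \eqref{problem:coupled:final} certifies $\frac{1}{N_2}\Vert\bar y_p\Vert_{\ell_2}^2\leq\mathbf 1^T\gamma$ for every admissible attack, so the SDP value upper-bounds $\gamma_{RA}(N_2)$; convexity is likewise immediate. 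This is weak duality, and up to this point your route and the paper's agree.

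The gap is the reverse inequality, and the route you sketch for it would not close it. The KYP/dissipativity characterisation of the $OOG$ in \cite{teixeira2015strategic} is a single-constraint result: with one stealthiness constraint, losslessness of the S-procedure is classical. Here there are $N_2$ separate quadratic constraints, and for two or more constraints the S-procedure over a generic set of signals is only sufficient, exactly as you concede; but neither the block-diagonal sparsity of $(\bar A,\bar B)$ nor the per-sample bookkeeping in $\Gamma(\gamma)$ supplies the missing convexity of the joint image of the $N_2+1$ quadratic functionals, and injectivity of $\bar B$ (your appeal to \textit{Assumption \ref{Ba}} and \textit{Lemma \ref{vanish_lemma}}) addresses attainment of a worst-case attack, which is not the issue --- what must be proved is equality of optimal \emph{values}, not that the supremum is achieved. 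The paper closes this gap with a different tool: it proves (\textit{Theorem \ref{thm_new}}, via the time-shift operators $\mathbf{T}_i$) that the functionals $-\mathcal{G}_0,\mathcal{G}_1,\dots,\mathcal{G}_{N_2}$ evaluated on trajectories of the time-invariant stacked system form an \emph{S-system} in the sense of \cite[Definition 4.3.1]{petersen2012robust}, and then invokes the lossless S-procedure for S-systems \cite[Theorem 4.3.1]{petersen2012robust}, whose Slater-type hypothesis is verified by the zero attack $a=0$ (which makes every $\mathcal{G}_k$ strictly positive since $\Vert y_{r,i}\Vert_{\ell_2}^2\ll 1$). It is this shift-invariance structure of the constraint functionals --- not any feature of the block-diagonal realization --- that makes the multi-constraint S-procedure exact, yields the multipliers $\gamma_i$, and hence the zero duality gap; the LMI then follows from the equivalence between the signal-space inequality and the matrix inequality in \cite[Proposition 2]{truncated}. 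Without an argument of this type, your proposal establishes only that \eqref{problem:coupled:final} upper-bounds $\gamma_{RA}(N_2)$, not the claimed equality.
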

\begin{proof}
See Appendix \ref{App2}.
\end{proof}

The optimization problem \eqref{uncertain:adversary} is the primal problem with its optimizer being the attack vector $a$. This optimization problem is non-convex. With the help of S-procedure and dissipative system theory, \eqref{uncertain:adversary} is converted to its equivalent dual SDP form \eqref{problem:coupled:final} with its optimizer $\gamma, P$, which is convex. This equivalency also helps us to conclude that the duality gap is zero. The necessary and sufficient conditions for the value of \eqref{problem:coupled:final} to be finite is given \textit{Lemma \ref{Bound_RA}}.

\begin{lemma}[Boundedness]\label{Bound_RA}
Consider $N_2$ i.i.d. realizations of the closed-loop system \eqref{system:CL:uncertain}, each with an uncertainty $\delta_i$. The optimal value of \eqref{problem:coupled:final} with the above mentioned system realizations is bounded if and only if the system with $\bar{\Sigma}_p=(\Bar{A},\Bar{B},\bar{C}_p,\bar{D}_p)$ and $\bar{\Sigma}_r=(\Bar{A},\Bar{B},\bar{C}_r,\bar{D}_r)$ satisfy one of the following:
\begin{enumerate}
    \item The system $\bar{\Sigma}_r$ has no zeros on the unit circle.
    \item The zeros on the unit circle of the system $\bar{\Sigma}_r$ (including multiplicity and input direction) are also zeros of $\bar{\Sigma}_p$.
\end{enumerate}
\end{lemma}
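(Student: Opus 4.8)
The plan is to recognize that the SDP \eqref{problem:coupled:final} is the dual of the sampled OOG problem \eqref{uncertain:adversary}, so its optimal value is bounded precisely when the primal supremum is finite; I would therefore characterize finiteness of the OOG directly in terms of the zero structure of the block-diagonal systems $\bar\Sigma_p$ and $\bar\Sigma_r$. First I would observe that since the samples are decoupled in the block-diagonal realization \eqref{balaji}, the aggregate system $\bar\Sigma_r$ (respectively $\bar\Sigma_p$) inherits its zeros as the union of the zeros of the individual $\tilde\Sigma_{r,i}$ (respectively $\tilde\Sigma_{p,i}$); I would make this precise by noting that the Rosenbrock system matrix of a block-diagonal realization is itself block-diagonal, so its rank drops exactly when one of the blocks loses rank. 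This reduces the multi-sample claim to reasoning about transmission zeros on the unit circle for the stacked system.

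Next I would make the connection between unbounded performance and zeros on the unit circle explicit. The OOG is unbounded exactly when the adversary can inject a nonzero signal that keeps the residue energy $\Vert y_r\Vert_{\ell_2}^2$ bounded (stealthy) while driving $\Vert y_p\Vert_{\ell_2}^2$ to infinity, subject to the state-vanishing constraint $x[\infty]=0$ (which, by \textit{Lemma \ref{vanish_lemma}}, forces $a[\infty]=0$ as well). I would argue that such an escaping direction exists if and only if $\bar\Sigma_r$ admits a zero on the unit circle whose associated input direction does \emph{not} simultaneously annihilate the output of $\bar\Sigma_p$: a unit-circle zero of $\bar\Sigma_r$ corresponds to a bounded, persistent sinusoidal input that produces zero (or asymptotically vanishing) residue output, hence is stealthy at essentially no detection cost, and if that same input direction is \emph{not} a zero of $\bar\Sigma_p$ the performance output does not vanish along it, so scaling the input makes $\Vert y_p\Vert_{\ell_2}^2/\Vert y_r\Vert_{\ell_2}^2 \to \infty$.

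This yields the two sufficient conditions immediately: if $\bar\Sigma_r$ has no unit-circle zeros (case 1) there is no bounded stealthy persistent input at all, so the residue constraint genuinely bounds the admissible attack energy and the OOG is finite; and if every unit-circle zero of $\bar\Sigma_r$ (counting multiplicity and input direction) is also a zero of $\bar\Sigma_p$ (case 2), then along every stealthy escaping direction the performance output likewise vanishes, so no direction gives infinite gain. For necessity I would show the contrapositive: if a unit-circle zero of $\bar\Sigma_r$ fails to be a zero of $\bar\Sigma_p$ with matching input direction, the construction above produces an attack sequence along which the objective diverges, so the value is unbounded. I would handle the state-vanishing requirement by working with inputs that are asymptotically vanishing truncations (or decaying modulations) of the zero-direction mode, exploiting that the supremum in $\ell_{2e}$ need not be attained; this ensures $x[\infty]=0$ while preserving the divergence of the ratio in the limit.

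The main obstacle I expect is the careful bookkeeping in the necessity direction, specifically constructing an explicit $\ell_{2e}$ attack sequence that simultaneously (a) is stealthy with $\Vert y_r\Vert_{\ell_2}^2\le 1$ in the limit, (b) satisfies the state-vanishing constraint $x[\infty]=0$, and (c) drives $\Vert y_p\Vert_{\ell_2}^2\to\infty$ — reconciling the persistent (marginally stable) nature of a unit-circle zero mode with the hard terminal constraint $x[\infty]=0$ requires a windowing/limiting argument rather than a single closed-form input, and the input-direction matching (as opposed to mere frequency coincidence) is where the precise notion of "zero with multiplicity and input direction" must be invoked.
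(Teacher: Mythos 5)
Your route is genuinely different from the paper's: you argue on the primal side, characterizing when the sampled OOG itself is finite via attack constructions and energy bounds, and you invoke the zero duality gap of \textit{Theorem \ref{Thm2}} to transfer the conclusion to \eqref{problem:coupled:final}. The paper instead never leaves the dual: using the equivalence between the dissipation inequality and a frequency-domain inequality (\cite[Proposition 2]{truncated}), it rewrites \eqref{problem:coupled:final} as $\inf\{\mathbf{1}^T\gamma \mid \bar G_r(\bar z)^T\Gamma(\gamma)\bar G_r(z)-\bar G_p(\bar z)^T\bar G_p(z)\succeq 0,\ \forall\, |z|=1\}$ and analyzes feasibility of this FDI by partitioning input directions according to whether $\bar G_r(z)x$ and $\bar G_p(z)x$ vanish; necessity is then a one-line contradiction obtained by evaluating the FDI at the offending zero direction. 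That dual route avoids the two hardest parts of your plan: the windowed/truncated attack construction reconciling a marginally stable zero mode with the constraint $x[\infty]=0$, and the coercivity estimates needed for sufficiency (case 1 requires $\inf_{|z|=1}\sigma_{\min}(\bar G_r(z))>0$, and case 2 requires a quantitative bound of the form $\bar G_p(\bar z)^T\bar G_p(z)\preceq c\,\bar G_r(\bar z)^T\bar G_r(z)$ on the unit circle near the common zeros, which is precisely the FDI the paper checks directly); as written, your sufficiency step is asserted rather than proved.

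There is also a concrete error: the claim that $\bar\Sigma_r$ inherits its zeros as the \emph{union} of the zeros of the individual $\tilde\Sigma_{r,i}$, justified by saying the Rosenbrock matrix of \eqref{balaji} is block diagonal. It is not: only $\bar A$ and $\bar C_r$ are block diagonal, while $\bar B$ and $\bar D_r$ are stacked, because all samples share the same input $a$. Hence the Rosenbrock matrix has a single block column of inputs, and $z$ is a zero of $\bar\Sigma_r$ with input direction $s$ if and only if $\tilde G_{r,i}(z)s=0$ for \emph{every} $i$ simultaneously---a common-direction (intersection-type) condition, the opposite of a union; for instance, two SISO blocks with distinct zeros stack into a system with no transmission zeros at all. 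This is not cosmetic: carried into your necessity argument, the union claim would let a unit-circle zero of a single sampled $\tilde\Sigma_{r,i}$ certify unbounded risk, which is false---an attack exploiting that zero is flagged by the residues of the other samples, and this is exactly the mechanism by which adding scenarios tightens the assessment. The remainder of your argument can survive if you drop this reduction entirely and work with the stacked transfer matrices throughout, as both the lemma statement and the paper's proof do.
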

\begin{proof}
See Appendix \ref{bound_2}.
\end{proof}
Let the outputs of $\bar{\Sigma}_p$ and $\bar{\Sigma}_r$ be represented by $\bar{y}_r$ and $\bar{y}_p$ respectively. Then, in words, \textit{Lemma \ref{Bound_RA}} states that the maximum expected loss \eqref{uncertain:adversary} is bounded if either, there does not exist an attack vector which makes the output $\bar{y}_r$ identically zero, or for all attack vectors which yields $\bar{y}_r$ identically $0$, it also yield $\bar{y}_p$ identically zero. 

It is important to study the conditions for unbounded risk because, if the conditions of the lemma do not hold, it means that there exists an attack vector that can cause a very huge impact whose upper-bounded cannot be determined and remain stealthy. However, as the conditions of the lemma are necessary and sufficient, the operator can alter the system matrices so that the conditions hold and consequently reduce the vulnerability of the system to such attacks. In the next section, we illustrate the results with a numerical example.

\section{Numerical example}\label{Example}

Consider a power generating system \cite[Section 4]{park2019stealthy} as shown in Fig. \ref{turbine} and represented by 
\begin{align}
\label{power_AB} \begin{bmatrix}
\dot{\eta}_1\\ \dot{\eta}_2 \\ \dot{\eta}_3
\end{bmatrix} &= 
\begin{bmatrix}
\frac{-1}{T_{lm}} & \frac{K_{lm}}{T_{lm}} & \frac{-2K_{lm}}{T_{lm}}\\
0 & \frac{-2}{T_h} & \frac{6}{T_h}\\
\frac{-1}{T_g R} & 0 & \frac{-1}{T_g}
\end{bmatrix}
\underbrace{\begin{bmatrix}
{\eta}_1\\ {\eta}_2 \\ {\eta}_3
\end{bmatrix}}_{\eta}
+ \begin{bmatrix}
0\\ 0 \\ \frac{1}{T_g}
\end{bmatrix}
\Tilde{u}\\
% \end{align}
% \begin{align}
\label{power_C} y &= \underbrace{ \begin{bmatrix}
1 & 0 & 0 
\end{bmatrix}}_{C}\begin{bmatrix}
\eta_1\\ \eta_2 \\ \eta_3
\end{bmatrix},\;\;
y_p = \underbrace{
\begin{bmatrix}
1 & 0 & 0\\ 0 & 1 & 0
\end{bmatrix}}_{C_p}\begin{bmatrix}
\eta_1\\ \eta_2 \\ \eta_3
\end{bmatrix}.
\end{align}
Here $\eta \triangleq [df; dp + 2 dx; dx]$, $df$ is the frequency deviation in \mbox{Hz}, $dp$ is the change in the generator output per unit (\mbox{p.u.}), and $dx$ is the change in the valve position \mbox{p.u.}. The parameters of the plant are listed in TABLE \ref{param}. The constants $T_{lm}, T_h$, and $T_g$ represent the time constants of load and machine, hydro turbine, and governor, respectively, and $R (Hz/p.u.)$ is the speed regulation due to the governor action. The constant $K_{lm}$ represents the steady state gain of the load and machine. The DT system matrices $(A^{\Delta},B^{\Delta},C^{\Delta},D^{\Delta})$ are obtained by discretizing the plant \eqref{power_AB}-\eqref{power_C} using zero-order hold with a sampling time $T_s$. The plant is stabilized with an output feedback controller of the form \eqref{C} with $D_c=19$. The detector is an observer-based residue generator of the form \eqref{D} with matrices $A_e = (A_d-K_eC_d), B_e = B_d, C_e=C_d$ where $K_e=\begin{bmatrix} 0.17 & -2.83 & -7.43 \end{bmatrix}^T$. In this particular setup, the adversary is considered to attack only the actuator, i.e., $B_a = 1$ and $D_a =0$. The other unspecified matrices are zero. Only the matrix $A^{\Delta}$ is a function of the variable $T_h$ and thus is uncertain. Next, considering \eqref{power_AB}-\eqref{power_C} where $T_h$ is uncertain and is uniformly distributed, we discuss the risk associated with the impact caused by a stealthy adversary. 

\begin{figure}
    \centering
    \includegraphics[width=8.4cm]{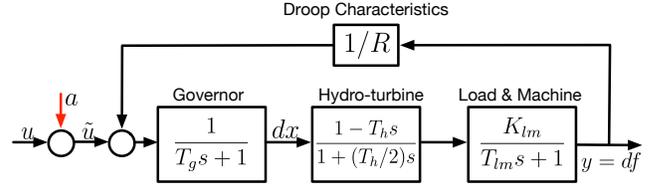}
    \caption{Power generating system with a hydro turbine.}
    \label{turbine}
\end{figure}
\begin{table}
\centering
\begin{tabular}{||c | c || c | c|| c | c ||} 
 \hline
 $K_{lm}$ & 1 & $T_{lm}$ & 6 &  $T_g$ & 0.2 \\
 \hline
 $T_{h}$ & $[4\;\;6]$ & $T_s$ & 0.1 & $R$ & 0.05\\
 \hline
\end{tabular}
\caption{System Parameters}
\label{param}
\end{table}
% \subsection{Risk assessment for a rational adversary}\label{EXMP-2}

In view of \textit{Lemma \ref{Expectation_sample_lemma}}, we choose $N_2=21$ to approximate the set $\Omega$. With this approximation, by solving the convex SDP \eqref{problem:coupled:final}, we obtain $\gamma_{RA}(N_2) = 617.267$. To recall, $\gamma_{RA}$ represents the maximum expected performance loss of the system operator. In this implementation, the value of risk was obtained for $\beta=0$ since we considered a maximally robust adversary. 
% We can obtain the value of risk for an omniscient adversary from Fig \ref{fig2} when $\beta=0$ as $1110$. Thus, the risk obtained for the rational adversary is lower compared to the risk obtained for the omniscient adversary when $\beta=0$. 
% This observation is also in accordance with \textit{Lemma \ref{contra}}

Next, we discuss the validity of the approximation $\Omega_{N_2}$. For varying values of $N_2$, the number of non-zero $\gamma_{i}$s obtained while solving \eqref{problem:coupled:final} is shown in the first two columns of TABLE \ref{guarentees_table}. In view of \textit{Theorem \ref{Thm_sampling}}, if we solve the problem \eqref{problem:coupled:final} with an arbitrary $N_2$, we can provide guarantees on the optimization problem \eqref{w1} as shown in column $3$ of TABLE \ref{guarentees_table}. Here, the function $\epsilon(\cdot)$ is evaluated according to \cite[(7)]{campi2018general} where $\lambda=10^{-2}$ and $s_{N_2}^* = |\text{supp}(\gamma)|$\footnote{The confidence is denoted by $\lambda$ here whereas it is denoted by $\beta$ in \cite{campi2018general}}. And $\epsilon(\cdot)$ should be interpreted as follows: The attack vector obtained by solving \eqref{problem:coupled:final} with $N_2$ samples, with probability $1-\lambda$, will be at-most feasible for the fraction $(1-\epsilon(\cdot))\Omega$ of the set $\Omega$.

Since $\epsilon(\cdot)$ represents the fraction of the uncertainty set to which the attack vector is infeasible, it is intuitive to expect this value to be close to zero. Numerically we have observed from TABLE \ref{guarentees_table} that $\text{supp}(\gamma)$ is always one. So, assuming again that $s_{N_2}^*=1$, the number of samples required to guarantee that the attack vector, with a probability $1-\lambda$, is feasible for $1-\epsilon(\cdot)$ of the uncertainty set can be obtained by picking $N_2$ such that \eqref{N2_closed_form} holds \cite[(7)]{campi2018general}
\begin{equation}\label{N2_closed_form}
  \sqrt[\leftroot{-2}\uproot{2}N_2-1]{\frac{\lambda}{N_2^2}} = 1-\epsilon(\cdot).  
\end{equation}
% For example, if $\epsilon(\cdot)=0.1$ and $\lambda=10^{-2}$, then by plugging these values into \eqref{N2_closed_form}, we obtain $N_2=139$. 
Thus \eqref{N2_closed_form} gives an idea of how $N_2$ increases as $\epsilon(\cdot)$ decreases. And consequently gives an idea of the scalability of the proposed approach as the dimension of the matrix inequality \eqref{problem:coupled:final} depends on $N_2$. We also depict the computation complexity by providing the time taken to solve \eqref{problem:coupled:final} in the last column of TABLE \ref{guarentees_table}. 

\begin{table}
\centering
\begin{tabular}{|| c | c | c | c | c||}
 \hline
 $N_2$ & $\vert\text{supp}(\gamma)\vert$ & $\epsilon(s_{N_2}^*)$ & $\gamma_{RA}(N_2)$ & Time (seconds)\\
 \hline 
 8 & 1 & 0.7141 & 638.04 & 17\\
 \hline
15 & 1 & 0.5112 & 628.60 & 116\\
 \hline
 21 & 1 & 0.4142 & 617.26 & 578\\
 \hline
\end{tabular}
\caption{Rational adversary - a posteriori Guarantees}
\label{guarentees_table}
\end{table}

% \begin{table}
% \centering
% \begin{tabular}{|| p{1cm} | p{3.1cm} | p{1.7cm} | p{1.2cm}||}
%  \hline
%  Risk metric & Parameters & Risk & Figures\\
%  \hline
%  $\gamma_{OA}$ & $N_1=599,\epsilon_1 = 0.05 $ and $ \beta_1 = \beta = 0.1$ & $1052 $ & Fig. \ref{fig1}., Fig. \ref{fig2}.\\
%  \hline
%  $\gamma_{RA}$ & $N_2=21$ & 617.26 & NA\\
%  \hline
% \end{tabular}
% \caption{Numerical example results - summary}
% \label{table-final}
% \end{table}

\section{Conclusion and Future work}\label{Conclusion}

In this article, we study the problem of risk assessment on uncertain control systems under a bounded-rational adversary. Using the OOG as an impact metric, we formulated the risk assessment problem and observe that it corresponds to non-convex robust optimization problem. A scenario-based approach was used to relax the robust optimization problem to their sampled counterpart. Using dissipative system theory, the non-convex sampled problem in signal space were converted to their convex dual problems in matrix inequalities. Detailed proof on the zero-duality gap was provided using the S-procedure. We additionally provide necessary and sufficient conditions for risks to be bounded which highlights the important role of uncertainty and how it is incorporated in attack scenarios. The results are depicted with numerical examples. Future work includes investigating the risk assessment problem where the uncertainty set can be approximated as a polytopic set.

\renewcommand{\thesection}{A.\arabic{section}}
\setcounter{section}{0}  
\renewcommand{\thedefinition}{A.\arabic{section}.\arabic{definition}}
\setcounter{definition}{0}
\renewcommand{\thetheorem}{A.\arabic{section}.\arabic{theorem}}
\setcounter{theorem}{0}
\renewcommand{\theassumption}{A.\arabic{section}.\arabic{assumption}}
\setcounter{assumption}{0}
\renewcommand{\theremark}{A.\arabic{section}.\arabic{remark}}
\setcounter{remark}{0}

\section*{Appendix}
\section{Explaining the performance criterion}\label{app_explain}
Consider an LTI system which was described in \eqref{P} without attack. Let us consider that, the operator is evaluating the performance of the system by a linear-quadratic (LQ) cost with cost matrix $Q \succeq 0$ on the states and  cost matrix $R \succ 0$ on the control input. Here, with a slight loss of generality, we consider that the cost on the cross-term is zero. Next, let us define
    \begin{equation}
        y_p[k] = \underbrace{\begin{bmatrix}
        \sqrt{Q} \\ 0
        \end{bmatrix}}_{C_J} x[k] + \underbrace{\begin{bmatrix}
        0 \\ \sqrt{R}
        \end{bmatrix}}_{D_J} u[k] = \begin{bmatrix}
        \sqrt{Q} & 0\\
        0 & \sqrt{R}
        \end{bmatrix} \begin{bmatrix}
        x[k] \\ u[k] 
        \end{bmatrix}
    \end{equation}
    Here, since the matrix $Q$ and $R$ are positive definite and positive semi-definite, their square root exists. Then, evaluating this LQ cost is equivalent to evaluating the $\ell_2$ norm of $y_p[k]$. That is 
    \begin{equation*}
    ||y_p||_{\ell_2}^2 \triangleq 
    \sum_{k=0}^{N} \begin{bmatrix}
    x[k]^T & u[k]^T
    \end{bmatrix}  
    \begin{bmatrix}
        \sqrt{Q} & 0\\
        0 & \sqrt{R}
        \end{bmatrix}
        \begin{bmatrix}
        \sqrt{Q} & 0\\
        0 & \sqrt{R}
        \end{bmatrix} \begin{bmatrix}
        x[k] \\ u[k] 
        \end{bmatrix}
    \end{equation*}
    Thus we depict $(i)$ how to choose the nominal matrices $C_J,$ and $D_J$, and $(ii)$ the reason to evaluate the attack impact in terms of $||y_p||_{\ell_2}^2$. Similarly, we assume that faults which have a very low effect on the detection output can occur rarely (say faults which change the energy of the detection output to $\epsilon_r$). Thus an attack is considered to be detected when $||y_p||_{\ell_2}^2 > 1$.

\section{Proof of \textit{Theorem \ref{Thm_sampling}}}\label{Main_thm}
Before presenting the proof, an introduction to scenario-based constraint satisfaction is provided.
\subsection*{Scenario-based constraint satisfaction \cite{campi2018general}}
Consider the constrained non-convex optimization problem $\inf_{\theta\in \Theta}  f(\theta,\delta),$ where $\delta \in \Delta$ is the uncertainty and $\theta$ is the infinite dimensional decision parameter which lies in the set $\Theta \triangleq \bigcap_{\delta \in \Delta} \Theta_{\delta}$, where $\Theta_{\delta}$ is the constraint set which includes all the admissible parameters for the isolated uncertainty $\delta$.
% \begin{equation}\label{app_p1}
% \begin{aligned}
% \inf_{\theta\in \Theta} \quad  f(\theta,\delta),
% \end{aligned}
% \end{equation}

\begin{definition}[Violation probability]
Let us define the violation probability as $\mathbb{V}(\theta) \triangleq \mathbb{P}_{\Delta}\{\delta \in \Delta \;|\;\theta \notin \Theta\}.$ $\hfill\triangleleft$
% $$
\end{definition}

\begin{definition}[$\epsilon$-level solution]
Let $\epsilon \in (0,1)$. Then, $\theta \in \Theta$ is an $\epsilon$ level solution if $\mathbb{V}(\theta) \leq \epsilon. \hfill\triangleleft$
\end{definition}

\begin{definition}[Confidence level : $\lambda$]
Let $\lambda \in (0,1)$. Then, the confidence level $\lambda$ represents the probability that $\theta$ is not an $\epsilon$ level solution. i.e., $\lambda \triangleq \mathbb{P}\{\mathbb{V}(\theta) > \epsilon\}.\hfill\triangleleft$ 
\end{definition}
% \begin{theorem}\cite[Theorem 1]{campi2018general}\label{thm_sample_constraint}
% Set a confidence level $\lambda \in (0,1)$. Let $\epsilon(\cdot)$ be a function such that 
% \begin{equation}\label{sample:constraint1}
% \epsilon(N_2) = 1 \;,\; \sum_{k=0}^{N_2-1} {N_2\choose k}(1 - \epsilon(k))^{N_2-k} = \lambda,
% \end{equation}
% where $N_2$ represents the predefined number of i.i.d. samples from the set $\Delta$. Then it holds that
% \begin{equation}\label{ss}
% \mathbb{P}^{N_2}\{ \mathbb{V}(\theta_{N_2}^*) > \epsilon(s_{N_2}^*)\} \leq \lambda ,
% \end{equation}
% where $\theta_{N_2}^*$ represents the optimal solution of \eqref{app_p1} with $\Theta = \bigcap_{i=1}^{N_2} \Theta_{i}$ and $s_{N_2}^*$ represents the cardinality of the support subsample. $\hfill\square$
% \end{theorem}

\begin{proof}[Proof of \textit{Theorem \ref{Thm_sampling}}]
The optimization problem \eqref{fi1} can be reformulated as
\begin{equation}\label{w7}
    -\inf_{a \in \ell_{2e}} \left\{ \mathbb{E}_{\Omega} \{-||y_p(\delta)||_{\ell_2}^2 \}\left. \right|\  \begin{pmatrix}
||y_r(\delta)||_{\ell_2}^2 \leq1\\
x(\delta)[\infty]=0
\end{pmatrix} \forall \delta \in \Omega \right\}
\end{equation}
% \begin{equation}\label{w7}
% \begin{aligned}
% -\inf_{a \in \ell_{2e}} \; & \mathbb{E}_{\Omega} \{-||y_p(\delta)||_{\ell_2}^2 \}\\
% \textrm{s.t.} \quad & 
% \begin{pmatrix}
% ||y_r(\delta)||_{\ell_2}^2 \leq1\\
% x(\delta)[\infty]=0
% \end{pmatrix} \forall \delta \in \Omega.
% \end{aligned}
% \end{equation}
% \begin{align*}
%     \Theta &\triangleq \bigcap_{\delta \in \Omega} \Theta_{\delta}, \text{where} \;\Theta_{\delta} \triangleq \left\{ a \in \ell_{2e} \vert 
% \begin{pmatrix}
% ||y_r(\delta)||_{\ell_2}^2 \leq1\\
% x(\delta)[\infty]=0
% \end{pmatrix}
% \right\}
% % &= \cap_{\delta \in \Omega} \Theta_{\delta}
% \end{align*}
In view of \cite[Theorem 1]{campi2018general}, let us define the objective function as $f(a,\delta) \triangleq \mathbb{E}_{\Omega} \{-||y_p(\delta)||_{\ell_2}^2\},$
where $y_p(\delta)$ is also a function of the attack vector $a$. Let us define the set $\Theta$ as in \eqref{theta}. Let us define a confidence level $\lambda \in (0,1)$, a constant $N_2$ and $\epsilon(\cdot)$ such that \eqref{sample:constraint2} holds. Then applying \cite[Theorem 1]{campi2018general}, we obtain that
\begin{equation}
\mathbb{P}^{N_2}\{\mathbb{P}_{\Omega}\{\delta \in \Omega\;\vert\;a_{N_2}^* \notin \Theta\} > \epsilon(s_{N_2}^*)\}\leq \lambda,
\end{equation}
Thus, with a probability level $1-\lambda$, the solution $a_{N_2}^*$ is $\epsilon(s_{N_2}^*)$ feasible to the optimization problem \eqref{w7}. In our problem setting, $a_{N_2}^*$ is the optimal argument of the optimization problem
\begin{equation}
\begin{aligned}
-arg \inf_{a \in \ell_{2e}} \; & \left \{\frac{-1}{N_2} \sum_{i=1}^{N_2} \{||y_{p,i}||_{\ell_2}^2\; \vert \;
\begin{pmatrix}
||y_{r,i}||_{\ell_2}^2 \leq 1\\
x_{i}[\infty]=0
\end{pmatrix}\} \right\}\\
\textrm{s.t.} \quad & \begin{pmatrix}
||y_{r,i}||_{\ell_2}^2 \leq1\\
x_{i}[\infty]=0
\end{pmatrix}, \forall i \in \{1,\dots,N_2\},
\end{aligned}
\end{equation}
which can be rewritten as \eqref{uncertain:adversary}. This concludes the proof.
\end{proof}
\section{Proof of \textit{Theorem \ref{Thm2}}}\label{App2}
Before presenting the proof, an introduction to S-system and S-procedure is provided as it helps formulating the proof.
\subsection*{S-system \cite[\textit{Definition 4.3.1}]{petersen2012robust}}
Let $\mathcal{L}$ be a real Hilbert space with a well defined inner product denoted by $\langle \cdot,\cdot \rangle$. Let $\mathcal{G}_0(\cdot),\dots,\mathcal{G}_k(\cdot)$ be quadratic functionals mapping $\mathcal{L} \to \mathbb{R}$. Let $\omega$ be a discrete time signal.
\begin{definition}[S-system]\cite[\textit{Definition 4.3.1}]{petersen2012robust}\label{S-system}
The quadratic functionals $\mathcal{G}_0(\cdot),\mathcal{G}_1(\cdot),\dots,\mathcal{G}_k(\cdot)$ form an S-system if there exist a bounded linear operator $\mathbf{T}_i: \mathcal{L} \rightarrow \mathcal{L},i=1,2,\dots,$ such that 
\begin{enumerate}
    \item $\langle \mathbf{T}_i\omega_1,\omega_2\rangle \rightarrow 0$ as $i \rightarrow \infty,\;\forall\; \omega_1,\omega_2 \in \mathcal{L}$.
    \item If $\omega \in \mathcal{M},$ then $\mathbf{T}_i\omega \in \mathcal{M},\;\forall i=1,2,\dots$, where $\mathcal{M}$ is a linear subspace of $\mathcal{L}$.
    \item $\mathcal{G}_j(\mathbf{T}_i\omega) \to \mathcal{G}_j(\omega)$ as $i \rightarrow \infty,\;\forall \omega\in \mathcal{L}$ and $j=0,1,\dots,k$.$\hfill\triangleleft$
\end{enumerate}
\end{definition}
% \subsection*{S-procedure \cite[Chapter 4]{petersen2012robust}}
% \begin{theorem}\cite[\textit{Theorem 4.3.1}]{petersen2012robust}\label{dual1}
% Suppose there exists quadratic functionals $\mathcal{G}_0(\cdot),\dots,\mathcal{G}_k(\cdot)$ which have the following properties:
% \begin{enumerate}
%     \item The functionals $-\mathcal{G}_0(\cdot),\mathcal{G}_1(\cdot),\dots,\mathcal{G}_k(\cdot)$ form an S-system.
%     \item $\exists \;\omega_0 \in \mathcal{L}$ such that 
%     \[
%     \mathcal{G}_1(\omega_0) > 0, \mathcal{G}_2(\omega_0) > 0, \dots, \mathcal{G}_k(\omega_0) > 0.
%     \]
%     \item The system
%     \begin{equation}\label{b1}
%             \mathcal{G}_0(\omega) > 0 , \mathcal{G}_i(\omega) \geq 0 \;i=\{1,\dots,k\}
%     \end{equation}
%     is not solvable $\forall \omega \in \mathcal{L}$.
% \end{enumerate}
% Then $\exists \; \gamma_1 \geq 0,\gamma_1 \geq 0,\dots,\gamma_k\geq 0$ such that 
% \begin{equation}\label{b}
% \mathcal{G}_0(\omega) + \sum_{i=1}^k\gamma_i\mathcal{G}_i(\omega) \leq 0,\; \forall \; \omega \in \mathcal{L}.
% \end{equation}$\hfill\square$
% \end{theorem}
% In general, the inequality \eqref{b} implies \eqref{b1}. But, the Theorem \ref{dual1} states that w.l.o.g. satisfying a set of inequalities as in $3)$ of the theorem, is equivalent to satisfying \eqref{b}. Using this result, we
We next present a theorem which helps in proving \textit{Theorem \ref{Thm2}}.
\begin{theorem}\label{thm_new}
Let us define a stable discrete time linear time-invariant system of the form
\begin{align}\label{eq1}
    {\eta}[k+1] &= \Phi\eta[k] + \Lambda\mu[k]\\
    \sigma[k] &=\Pi\eta[k] + \Upsilon \mu[k]\qquad \eta[0]=\eta_0, \; \eta[\infty]=0.
\end{align}
Let us define the set $\mathcal{L}$ as
\begin{equation}\label{eq2}
    \mathcal{L} \triangleq \left\{  \omega = \begin{bmatrix}
    \sigma\\\mu
    \end{bmatrix} \vert \begin{bmatrix}
    \sigma, \mu \text{\;are related by\;} \eqref{eq1}\\
    \mu \in \ell_{2e}, \; \eta[0]=\eta_0, \; \eta[\infty]=0.
    \end{bmatrix} \right\}
\end{equation}
Let us also define the functionals $ \mathcal{G}_0(\omega) \triangleq \sum_{k=0}^{\infty} \omega[k]^TM_0\omega[k] + \zeta_0, \dots,   \mathcal{G}_k(\omega) \triangleq \sum_{k=0}^{\infty} \omega[k]^TM_k\omega[k] + \zeta_k$.
% \begin{align}\label{eq3}
%     \mathcal{G}_0(\omega) &\triangleq \sum_{k=0}^{\infty} \omega[k]^TM_0\omega[k] + \zeta_0\\
%     % \mathcal{G}_1(\omega(\cdot)) &\triangleq \sum_{k=0}^{\infty} \omega[k]^TM_1\omega[k] + \zeta_1\\
%     &\;\;\vdots\\
%     \mathcal{G}_k(\omega) &\triangleq \sum_{k=0}^{\infty} \omega[k]^TM_k\omega[k] + \zeta_k,
% \end{align}
where $M_0,\dots,M_k$ are given matrices and $\zeta_0,\dots,\zeta_k$ are given constants. Then, the functionals $-\mathcal{G}_0(\cdot),\dots,\mathcal{G}_k(\cdot)$ form an S-system.
\end{theorem}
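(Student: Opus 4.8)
The plan is to exhibit an explicit family of operators $\mathbf{T}_i$ and verify, one by one, the three defining properties of an S-system in \textit{Definition \ref{S-system}}. A useful preliminary observation is that the sign in front of $\mathcal{G}_0$ is immaterial: $-\mathcal{G}_0(\omega)=\sum_{k}\omega[k]^T(-M_0)\omega[k]+(-\zeta_0)$ is again a quadratic functional plus a constant, and every one of the three properties is either a convergence or a subspace-invariance statement that is insensitive to this global sign. Hence it suffices to verify the properties for a generic $\mathcal{G}_j(\omega)=\sum_{k=0}^{\infty}\omega[k]^TM_j\omega[k]+\zeta_j$.

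First I would pin down the ambient Hilbert space. For the functionals $\mathcal{G}_j$ to be well defined real numbers the trajectories must have finite energy, so the relevant $\omega$ lie in $\ell_2$; by the stability hypothesis of the statement (see also \textit{Assumption \ref{assume_stable}}) together with $\eta[\infty]=0$ this is consistent, by the same vanishing mechanism used in \textit{Lemma \ref{vanish_lemma}}. I therefore equip $\mathcal{L}$ with the standard inner product $\langle\omega_1,\omega_2\rangle=\sum_{k=0}^{\infty}\omega_1[k]^T\omega_2[k]$. Under the equilibrium \textit{Assumption \ref{equil_begin}} we have $\eta_0=0$, so $\mathcal{L}$ is a genuine linear subspace of $\ell_2$ and I take $\mathcal{M}$ to be the subspace of trajectories launched from the zero state (which coincides with $\mathcal{L}$ in this case).

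The operators I would use are the $i$-step delays
\begin{equation}
(\mathbf{T}_i\omega)[k] \triangleq \begin{cases} \omega[k-i], & k\ge i,\\[2pt] 0, & 0\le k<i. \end{cases}
\end{equation}
Because $\eta[0]=0$, feeding the delayed input $\mu[k-i]$ into \eqref{eq1} produces exactly the delayed state $\eta[k-i]$ and the delayed output $\sigma[k-i]$; thus $\mathbf{T}_i\omega$ again obeys the dynamics with $\eta[0]=0$ and, since $\Phi$ is Schur, with $\eta[\infty]=0$, so $\mathbf{T}_i$ maps $\mathcal{M}$ into $\mathcal{M}$, which is Property 2, and each $\mathbf{T}_i$ is a bounded linear operator (indeed an isometry). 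For Property 3 a shift of the summation index gives $\sum_{k}(\mathbf{T}_i\omega)[k]^TM_j(\mathbf{T}_i\omega)[k]=\sum_{m\ge 0}\omega[m]^TM_j\omega[m]$, hence $\mathcal{G}_j(\mathbf{T}_i\omega)=\mathcal{G}_j(\omega)$ for every $i$ (the constants $\zeta_j$ are untouched) and the required limit holds trivially. For Property 1 I would invoke Cauchy--Schwarz,
\begin{equation}
\bigl|\langle\mathbf{T}_i\omega_1,\omega_2\rangle\bigr| = \Bigl|\sum_{k\ge i}\omega_1[k-i]^T\omega_2[k]\Bigr| \le \Vert\omega_1\Vert_{\ell_2}\Bigl(\sum_{k\ge i}\Vert\omega_2[k]\Vert^2\Bigr)^{1/2},
\end{equation}
and note that the tail sum vanishes as $i\to\infty$ because $\omega_2\in\ell_2$, so $\langle\mathbf{T}_i\omega_1,\omega_2\rangle\to 0$.

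The hard part will not be any delicate estimate but the initial- and terminal-condition bookkeeping, i.e. checking that the delayed trajectory stays inside $\mathcal{L}$. This is exactly where $\eta_0=0$ (\textit{Assumption \ref{equil_begin}}) is essential: for a nonzero $\eta_0$ the delayed signal would restart from the free response $\Phi^k\eta_0$ on $[0,i)$ rather than from $\eta_0$, so it would leave $\mathcal{L}$, and moreover the persistent free-response component would obstruct Property 1. For a general $\eta_0$ one therefore decomposes $\omega$ into the fixed free response plus a forced part with zero initial state, applies the delay only to the forced part, and takes $\mathcal{M}$ to be that forced-response subspace; the stability of $\Phi$ again guarantees the terminal constraint $\eta[\infty]=0$ is preserved. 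Having verified Properties 1--3, I would conclude that $-\mathcal{G}_0(\cdot),\mathcal{G}_1(\cdot),\dots,\mathcal{G}_k(\cdot)$ form an S-system.
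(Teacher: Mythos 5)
Your proposal is correct and follows essentially the same route as the paper's own proof: the same family of $i$-step delay operators $\mathbf{T}_i$, the Cauchy--Schwarz tail estimate for Property 1, the choice $\mathcal{M}=\mathcal{L}\vert_{\eta_0=0}$ invariant under delays for Property 2, and the index-shift identity $\mathcal{G}_j(\mathbf{T}_i\omega)=\mathcal{G}_j(\omega)$ for Property 3. Your added care in fixing the ambient $\ell_2$ inner product (so the tail sum genuinely vanishes) and in handling nonzero $\eta_0$ via the free/forced decomposition tightens details the paper glosses over, but it is the same argument.
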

\begin{proof}[Proof of \textit{Theorem \ref{thm_new}}]
In view of \textit{Definition \ref{S-system}}, let us define the operator $\mathbf{T}_i$ as
\[\mathbf{T}_i\omega[k]= 
\begin{cases}
    0, & \text{if}\;\; 0\leq k \leq i\\
    \omega[k-i],  & \text{if}\;\; k > i
\end{cases}.\]
For any $\omega_1,\omega_2 \in \mathcal{L}$, $\langle \mathbf{T}_i\omega_1,\omega_2 \rangle =  |\sum_{k=0}^\infty \langle \mathbf{T}_i\omega_1[k],\omega_2[k]\rangle|^2 = $
\begin{equation}
|\sum_{k=i}^\infty \langle \omega_1[k-i],\omega_2[k]\rangle|^2 \leq_{1} \sum_{k=i}^{\infty} ||\omega_1[k-i]||^2 \sum_{k=i}^{\infty} ||\omega_2[k]||^2
\end{equation}
where the inequality $1$ holds due to the Cauchy Schwartz inequality. Since the theorem states that $\lim_{k\to \infty}\eta[k] =0$, it holds from \textit{Lemma \ref{vanish_lemma}} that $\lim_{k\to \infty}\mu[k] =0$. Following which, it immediately holds that $\lim_{k\to \infty}\sigma[k] =0$ which implies that $\lim_{k\to \infty}\omega[k] =0$. Due to the above reasoning, it holds that $\lim_{i\to \infty} \sum_{k=i}^{\infty}||\omega_2[k]||^2 \to 0 \; \forall \omega \in  \mathcal{L}$ . Thus condition $1)$ of \textit{Definition \ref{S-system}} holds. 

Let us consider a set $\mathcal{M} = \mathcal{L}|_{\eta_0=0}$. Then, if $\omega \in \mathcal{M}$, due to the time-invariant property of \eqref{eq1}, $\mathbf{T}_i\omega \in \mathcal{M}$. This proves that $\exists \;\mathcal{M} \subset \mathcal{L}$ which is invariant under the operator $\mathbf{T}_i$. Thus condition $2)$ of \textit{Definition \ref{S-system}} holds. Let us consider $\mathcal{G}_0(\mathbf{T}_i\omega) = $
\begin{align*}
\sum_{k=0}^{\infty} (\mathbf{T}_i\omega[k])^TM_0\mathbf{T}_i\omega[k] + \zeta_0 = \sum_{k=i}^{\infty} \omega[k-i]^TM_0\omega[k-i] + \zeta_0
\end{align*}
$= \mathcal{G}_0(\omega)$. Similarly, it can be observed that $\mathcal{G}_j(\mathbf{T}_i\omega) = \mathcal{G}_j(\omega)\;\forall j=\{0,\dots,k\}$. Thus condition $3)$ of \textit{Definition \ref{S-system}} holds. Since we have shown that the functionals $-\mathcal{G}_0(\omega(\cdot)),\mathcal{G}_1(\omega(\cdot)),\dots,\mathcal{G}_k(\omega(\cdot))$ satisfy conditions $1),2)$ and $3)$ of \textit{Definition \ref{S-system}}, they form an S-system. This concludes the proof.
\end{proof}
Now we are ready to present the proof of \textit{Theorem \ref{Thm2}}.

\begin{proof}
% [Proof of \textit{Theorem \ref{Thm2}}]
\subsubsection*{Step 1}[Problem reformulation]
Using the hypograph formulation, \eqref{uncertain:adversary} can be rewritten as 
\begin{equation}\label{eqq1}
\sup_{\upsilon, a \in \ell_{2e}} \left\{  \upsilon \; \Bigg|\; \begin{aligned}
& \frac{1}{N_2} \sum_{i=1}^{N_2} ||y_{p,i}||_{\ell_2}^2 \geq \upsilon\\
& ||y_{r,i}||_{\ell_2}^2 \leq 1,\;x_{i}[\infty]=0,\; \forall i \in \{1,\dots,N_2\}
\end{aligned}\right\}
\end{equation}
% \begin{equation}\label{eqq1}
% \begin{aligned}
% \sup_{\upsilon, a \in \ell_{2e}} \; & \upsilon\\
% \textrm{s.t.} \quad &  \left \{\frac{1}{N_2} \sum_{i=1}^{N_2} ||y_{p,i}||_{\ell_2}^2\;\right\} \geq \upsilon\\
% & \begin{pmatrix}
% ||y_{r,i}||_{\ell_2}^2 \leq1\\
% x_{i}[\infty]=0
% \end{pmatrix}, \forall i \in \{1,\dots,N_2\}.
% \end{aligned}
% \end{equation}
From now, in the next two steps of the proof, we focus on the optimization problem \eqref{eqq1} without the state constraints
\begin{equation}\label{eqq2}
\sup_{\upsilon, a \in \ell_{2e}} \left\{  \upsilon \; \Bigg|\; \begin{aligned}
& \frac{1}{N_2} \sum_{i=1}^{N_2} ||y_{p,i}||_{\ell_2}^2 \geq \upsilon\\
& ||y_{r,i}||_{\ell_2}^2 \leq 1,\; \forall i \in \{1,\dots,N_2\}
\end{aligned}\right\}
\end{equation}
% \begin{equation}\label{eqq2}
% \begin{aligned}
% \sup_{\upsilon, a \in \ell_{2e}} \; & \upsilon\\
% \textrm{s.t.} \quad &  \left \{\frac{1}{N_2} \sum_{i=1}^{N_2} ||y_{p,i}||_{\ell_2}^2\;\right\} \geq \upsilon\\
% & ||y_{r,i}||_{\ell_2}^2 \leq 1\; \forall i \in \{1,\dots,N_2\}.
% \end{aligned}
% \end{equation}

The reason to focus on \eqref{eqq2} rather than \eqref{eqq1} is that S-procedure becomes convenient to apply when there are no equality constraints. Thus, we drop the state constraints now and introduce them back at the end of \textit{Step 3}. Equivalently \eqref{eqq2} be reformulated as
\begin{equation}\label{eq4}
\inf_{\upsilon}\left\{\upsilon \Bigg|\left\{a\in\ell_{2e}\Big|\;\begin{aligned}
&  \frac{1}{N_2} \sum_{i=1}^{N_2} ||y_{p,i}||_{\ell_2}^2 - \upsilon > 0 \\
& 1-||y_{r,i}||_{\ell_2}^2 \geq 0, \forall i 
\end{aligned}\right\}=\emptyset\right\}
\end{equation}
% \begin{equation}\label{eq4}
% \begin{aligned}
% \inf_{\upsilon}& \;\upsilon\\
% \textrm{s.t.}&\left\{a\in\ell_{2e}\Big|\;\begin{aligned}
% &  \frac{1}{N_2} \sum_{i=1}^{N_2} ||y_{p,i}||_{\ell_2}^2 - \upsilon > 0 \\
% & 1-||y_{r,i}||_{\ell_2}^2 \geq 0, \forall i \in \{1,\dots,N_2\}
% \end{aligned}\right\} = \emptyset
% \end{aligned}
% \end{equation}
\subsubsection*{Step 2}
% [Illustrate that S-procedure can be applied to our problem setting]
Recall that the system matrices for the system with the isolated uncertainty $\delta_i$, with attack vector as input and performance and detection outputs as outputs are
$\tilde{\Sigma}_{p,i} \triangleq ({A}_{cl,i}, {B}_{cl,i}, {C}_{p,i}, {D}_{p,i})$ and  $\tilde{\Sigma}_{r,i} \triangleq ({A}_{cl}, {B}_{cl,i}, {C}_{r,i}, {D}_{r,i})$. Let us consider a linear time-invariant system of the form \eqref{eq1} with the attack vector $a$ as input $\mu(\cdot)$ and the vector $\begin{bmatrix}
y_{p,1}&y_{r,1},\dots,y_{pN_2}&y_{r,N_2}\end{bmatrix}^T$ as output $\sigma(\cdot)$. This system will be stable due to \textit{Assumption \ref{assume_stable}} and the system matrices of \eqref{eq1} would read
\begin{align}
\left[
\begin{array}{c|c}
\Phi & \Lambda\\
\hline
\\[\dimexpr-\normalbaselineskip+2pt] \Pi & \Upsilon
\end{array}
\right] = 
\left[ 
\begin{array}{c|c}
\begin{matrix}
A_{cl,1}& 0 & 0 \\
\; & \ddots & \;\\
0 & 0 & A_{cl,N_2}
\end{matrix}     & \begin{matrix}
B_{cl,1} \\ \vdots \\ B_{cl,N_2}
\end{matrix} \\
\hline
\\[\dimexpr-\normalbaselineskip+2pt] \begin{matrix}
C_{p,1} & 0 & 0\\
C_{r,1} & 0 & 0\\
\; & \ddots & \; \\
0 & 0 & C_{p,N_2}\\
0 & 0 & C_{r,N_2}
\end{matrix} & \begin{matrix}
D_{p,1}\\
D_{r,1}\\
\vdots\\
D_{p,N_2}\\
D_{r,N_2}
\end{matrix}
\end{array}
\right],
\end{align}
For this system, let us define the set $\mathcal{L}$ as in \eqref{eq2} where $\omega = \begin{bmatrix} a^T& \sigma \end{bmatrix}^T \in \mathbb{R}^{n_a+N_2(n_p+n_r)}$. In view of \eqref{eq4}, let us also define the functionals $\mathcal{G}_0(\omega) \triangleq $
\begin{equation}\label{G_0}
\frac{1}{N_2} \sum_{i=1}^{N_2} ||y_{p,i}||_{\ell_2}^2 - \upsilon= \sum_{k=0}^{\infty} \omega[k]^TM_0\omega[k] + \zeta_0
\end{equation}
where $M_0 \in \mathbb{R}^{(n_a+N_2(n_p+n_r)) \times (n_a+N_2(n_p+n_r))}, \zeta_0=-\upsilon$ and $M_0(i,j)= 1, \text{if}\; i=j, n_a+i$  is odd and $0$ elsewhere. Similarly let us define 
\begin{equation}\label{G_1}
    \mathcal{G}_k(\omega) \triangleq  -||y_{r,k}||_{\ell_2}^2+1 \;=\;\sum_{k=0}^{\infty} \omega[k]^TM_k\omega[k] + \zeta_k,
\end{equation}
$\forall k =\{1,2,\dots,N_2\}$, where $\zeta_k = -1$ and $M_k(i,j)= -1, \text{if}\; i=j, i=n_a+2k-1$ and $0$ elsewhere. Here, $M_k$ has same dimension as $M_0\;\forall k =\{1,2,\dots,N_2\}$. Therefore, we have shown that the constraints of \eqref{eq4} can be rewritten as functionals of the set $\mathcal{L}$. We can now see that the functionals $-\mathcal{G}_0(\cdot),\mathcal{G}_1(\cdot),\dots,\mathcal{G}_k(\cdot)$ along with \textit{Lemma \ref{vanish_lemma}} satisfy the conditions under which \textit{Theorem \ref{thm_new}} holds. Thus, by applying \textit{Theorem \ref{thm_new}}, if follow that the functionals $-\mathcal{G}_0(\cdot),\mathcal{G}_1(\cdot),\dots,\mathcal{G}_k(\cdot)$ form an S-system. Let this be argument 1.

In the case the adversary chooses not to attack the system, i.e., $a=0 \in \ell_{2e}$, it follows that $||y_{r,i}||_{\ell_2}^2 \approx 0 \;\forall \delta_i$. The residual energy $||y_{r,i}||_{\ell_2}^2$ is not strictly zero since there might be residual outputs due to difference in initial condition between the system and the detector. The threshold ($\epsilon_r = 1$) is chosen in such a way that $||y_{r,i}||_{\ell_2}^2 \ll 1\; \forall \delta_i$ when $a=0$. Thus, it holds that $\exists \; \omega_0 = [a,\;\omega] = [0,\; 0_{+}] $ s.t. $ -||y_{r,k}||_{\ell_2}^2+1 = \mathcal{G}_k(\omega_0) >0\; \forall k=\{1,\dots,N_2\}$. Here $0_{+}$ represents a real number close to zero. Let this be argument 2. And from \eqref{eq4}, we know that the system $\mathcal{G}_0(\omega) > 0 , \mathcal{G}_i(\omega) \geq 0 \;i=\{1,\dots,k\}$ is not solvable. Let this be argument 3. Using the above arguments (1-3) and \cite[Theorem 4.3.1]{petersen2012robust}, we can conclude that when we consider the functionals defined in \eqref{G_0} and \eqref{G_1}, $\exists \; \gamma_1 \geq 0,\gamma_1 \geq 0,\dots,\gamma_{N_2}\geq 0$ such that the following inequality holds
\begin{equation}\label{eq5}
 \mathcal{G}_0(\omega) + \sum_{i=1}^{N_2}\gamma_i\mathcal{G}_i(\omega) \leq 0,\; \forall \; \omega \in \mathcal{L}.
\end{equation}
To conclude, in this step we have shown that the constraint of \eqref{eq4} holds only if \eqref{eq5} is true. And the converse is generally true.
\subsubsection*{Step 3}
% [Using S-procedure to reformulate \eqref{eq4}]
We have shown that the constraint of \eqref{eq4} holds iff \eqref{eq5} is true. Then, we reformulate \eqref{eq4} as 
\begin{equation}
\inf_{\upsilon,\gamma_1\geq 0,\dots,\gamma_{N_2}\geq 0} \left\{ \upsilon \;\Big|\; \mathcal{G}_0(\omega) + \sum_{i=1}^{N_2}\gamma_i\mathcal{G}_i(\omega) \leq 0,\; \forall \; \omega(\cdot) \in \mathcal{L}\right\}. 
\end{equation}
% \begin{equation}
% \begin{aligned}
% \inf_{\upsilon,\gamma_1\geq 0,\dots,\gamma_{N_2}\geq 0} \; & \upsilon\\
% \textrm{s.t.} \quad &  \mathcal{G}_0(\omega) + \sum_{i=1}^{N_2}\gamma_i\mathcal{G}_i(\omega) \leq 0,\; \forall \; \omega(\cdot) \in \mathcal{L}. 
% \end{aligned}
% \end{equation}
Substituting the definition of $\mathcal{G}_0(\omega)$, we obtain
\begin{equation}\label{eq6}
\inf_{\gamma\geq 0}\left\{ \inf_{\upsilon} \left\{\upsilon\Big|\sum_{i=1}^{N_2} \left\{ \frac{1}{N_2}||y_{p,i}||_{\ell_2}^2\; + \gamma_i\mathcal{G}_i(\omega)\right\} \leq \upsilon,\forall\omega \right\}\right\}
\end{equation}
% \begin{equation}\label{eq6}
% \inf_{\gamma\geq 0}\left\{
% \begin{aligned}
% \inf_{\upsilon} \; & \upsilon\\
% \textrm{s.t.} \quad &  \sum_{i=1}^{N_2} \left\{ \frac{1}{N_2}||y_{p,i}||_{\ell_2}^2\;
%  + \gamma_i\mathcal{G}_i(\omega)\right\} \leq \upsilon,\; \forall \; \omega,
% \end{aligned}\right\}
% \end{equation}
where $\gamma=[\gamma_1,\dots,\gamma_{N_2}]^T$. The inner optimization problem of \eqref{eq6} resembles an epigraph formulation which can be rewritten as
\begin{equation}
\inf_{\gamma\geq 0}\left\{
\begin{aligned}
\sup_{\omega} \; & \left\{\sum_{i=1}^{N_2} \left\{ \frac{1}{N_2}||y_{p,i}||_{\ell_2}^2\;
 + \gamma_i\mathcal{G}_i(\omega)\right\}\right\}
\end{aligned}\right\}.
\end{equation}
Substituting the definition of $\mathcal{G}_i(\omega),\;\forall i=\{1,\dots,N_2\}$, we obtain
\begin{equation}\label{eq7}
\inf_{\gamma\geq 0}\underbrace{
\sup_{\omega} \left\{\sum_{i=1}^{N_2} \left\{ \frac{1}{N_2}||y_{p,i}||_{\ell_2}^2-\gamma_i||y_{r,i}||_{\ell_2}^2\right\}+\sum_{i=1}^{N_2} \gamma_i \right\} 
}_{\kappa}
\end{equation}
Observe that $\kappa$ is a maximization problem with a quadratic term in its objective. Thus, it holds that
\[
\kappa = 
\begin{cases}
    \mathbf{1}^T \gamma, & \text{if}\;\;\sum_{i=1}^{N_2} \left\{ \frac{1}{N_2}||y_{p,i}||_{\ell_2}^2\;
 - \gamma_i||y_{r,i}||_{\ell_2}^2\right\} \leq 0\\
    +\infty,  & \text{otherwise}
\end{cases}.
\]
Using the above result in \eqref{eq7}, we obtain
\begin{equation*}
\inf_{\gamma \geq 0}\; \left\{ \mathbf{1}^T \gamma \Bigg| \sum_{i=1}^{N_2} \frac{1}{N_2}||y_{p,i}||_{\ell_2}^2 - \gamma_i ||y_{r,i}||_{\ell_2}^2 \leq 0, \forall a \in \ell_{2e}\right\}
\end{equation*}
% \begin{equation*}
% \begin{aligned}
% \inf_{\gamma \geq 0}\; & \mathbf{1}^T \gamma \\
% \textrm{s.t.} \quad & \sum_{i=1}^{N_2} \frac{1}{N_2}||y_{p,i}||_{\ell_2}^2 - \gamma_i ||y_{r,i}||_{\ell_2}^2 \leq 0, \forall a \in \ell_{2e}
% \end{aligned}
% \end{equation*}
where $\omega$ is replaced by $a$ since it is only the control variable. Finally, we add the state constraints that were removed while formulating the optimization problem \eqref{eqq2}
\begin{equation}\label{eqq3}
\inf_{\gamma \geq 0} \left\{ \mathbf{1}^T \gamma \;\Bigg| \begin{aligned}
&\sum_{i=1}^{N_2} \frac{1}{N_2}||y_{p,i}||_{\ell_2}^2 - \gamma_i ||y_{r,i}||_{\ell_2}^2 \leq 0, \forall a \in \ell_{2e}\\
&x_{i}[\infty]=0 \;\forall i \in \{1,\dots,N_2\} \end{aligned}\right\}
\end{equation}
% \begin{equation}\label{eqq3}
% \begin{aligned}
% \inf_{\gamma \geq 0} \; & \mathbf{1}^T \gamma\\
% \textrm{s.t.} \quad & \sum_{i=1}^{N_2} \frac{1}{N_2}||y_{p,i}||_{\ell_2}^2 - \gamma_i ||y_{r,i}||_{\ell_2}^2 \leq 0, \forall a \in \ell_{2e}\\
% &x_{i}[\infty]=0 \;\forall i \in \{1,\dots,N_2\}.
% \end{aligned}
% \end{equation}
Thus in this step, we have shown using S-procedure that the optimization problem \eqref{eqq1} and \eqref{eqq3} are equivalent. 
\subsubsection*{Step 4}
% [Reformulating \eqref{eqq3} as an SDP]
Define $\bar{x}[\infty] =$ 
$ \begin{bmatrix} x_{1}[\infty]^T & \dots & x_{N_2}[\infty]^T \end{bmatrix}^T, \bar{x}[0] = \begin{bmatrix} x_{1}[0]^T & \dots & x_{N_2}[0]^T \end{bmatrix}^T$, \\
$\bar{y}_p = \begin{bmatrix} y_{p1}^T & \dots & y_{pN_2}^T
\end{bmatrix}^T$, and $\bar{y}_r = \begin{bmatrix} y_{r1}^T & \dots & y_{rN_2}^T \end{bmatrix}^T$. Using these definitions, the constraint of \eqref{eqq3} can be rewritten as 
\begin{align}\label{con1}
-\frac{1}{N_2}||\bar{y}_p||_{\ell_2}^2 + ||\sqrt{\Gamma(\gamma)}\bar{y}_r||_{\ell_2}^2 \geq 0,  \forall a \in \ell_{2e},\;\; \bar{x}[\infty]=0
\end{align}
where $\Gamma(\gamma)$ is defined in the theorem statement. Additionally due to \textit{Assumption \ref{equil_begin}}, we have $\bar{x}[0] =0$. Next let us define $y_1 = \sqrt{\Gamma(\gamma)}\bar{y}_r, y_2=\frac{1}{\sqrt{N_2}}\bar{y}_p$ and the supply rate $s(\cdot) \triangleq  ||{y_1}||_{2}^2 -||{y_2}||_{2}^2$. Then, we have shown from \eqref{con1} that \cite[\textit{Proposition 2, 2)}]{truncated} holds. Equivalently, using \cite[\textit{Proposition 2, 3)}]{truncated}, we replace \eqref{con1} by the constraint of \eqref{problem:coupled:final} where $\bar{\Sigma}_p = (\bar{A},\bar{B},\bar{C}_p,\bar{D}_p)$ and $\bar{\Sigma}_r = (\bar{A},\bar{B},\bar{C}_r,\bar{D}_r)$ represent the system with the attack as input and $\bar{y}_p$ and $\bar{y}_r$ as system outputs respectively. Constructing these system matrices, as we did in \textit{Step 2} of this proof concludes the proof.
\end{proof}

% \subsection*{Proof of \textit{Lemma \ref{dual_OA}}}\label{proof_remark}
% \begin{proof}
% Let $N_2=1$. Then, \eqref{uncertain:adversary} would become \eqref{pp1}. We have depicted in the proof of \textit{Theorem \ref{Thm2}} that \eqref{uncertain:adversary} can be equivalently reformulated as \eqref{eqq3}. Using this equivalent relation, it can be seen that the optimization problem \eqref{pp1} can be equivalently reformulated as \eqref{t5}. This concludes the proof. 
% \end{proof}

\section{Proof of \textit{Lemma \ref{Bound_RA}}}\label{bound_2}

\begin{proof}
To recall, the optimization problem \eqref{problem:coupled:final} was formulated using \cite[\textit{Proposition 2, 3)}]{truncated} where $y_1=\sqrt{\Gamma(\gamma)} \bar{y}_r$ and $y_2= \frac{1}{\sqrt{N_2}}\bar{y}_p$. Here $\bar{y}_p$ and $\bar{y}_r$ represents the outputs of $\bar{\Sigma}_p$ and $\bar{\Sigma}_r$ respectively. Due to the equivalency between $3)$ and $4)$ of \cite[\textit{Proposition 2, 3)}]{truncated}, the FDI \cite[\textit{Proposition 2, 4)}]{truncated} should hold $\;\forall \;|z|=1$. Since we know that $y_1 = \sqrt{\Gamma(\gamma)} \bar{y}_r$ and $y_2 = \bar{y}_p$, we can deduce that $G_1(z)$ corresponds to $\sqrt{\Gamma(\gamma)}\bar{G}_{r}(z)$ and $G_2(z)$ to $\frac{1}{N_2}\bar{G}_{r}(z)$ in \cite[\textit{Proposition 2, 4)}]{truncated}, where $\bar{G}_{r}(z)= \bar{C}_{r}(z_1I-\bar{A})^{-1}\bar{B}+\bar{D}_{r}$ and $\bar{G}_{p}(z) \triangleq \bar{C}_{p}(z_1I-\bar{A})^{-1}\bar{B}+\bar{D}_{p}$. Thus, \eqref{problem:coupled:final} can be rewritten as
\begin{equation}\label{lem_s2}
\inf\Big\{ \textbf{1}^T\gamma\Big|\bar{G}_{r}(\bar{z})^T\Gamma(\gamma)\bar{G}_{r}({z})-\bar{G}_{p}^T(\bar{z})\bar{G}_{p}({z})\succeq 0,\forall|z|=1\Big\}
\end{equation}
Let us define the following sets such that $\mathbb{C}^{n_a} = \mathcal{Z}_{pr} \cup \mathcal{Z} \cup \mathcal{Z}_{r} \cup \mathcal{Z}_{p}$.
\begin{align}
    \mathcal{Z}_{pr} & \triangleq \{x \in \mathbb{C}^{n_a} \;|\; \bar{G}_{r}({z})x = 0 , \bar{G}_{p}({z})x = 0\},\\
    \mathcal{Z} & \triangleq \{x \in \mathbb{C}^{n_a} \;|\; \bar{G}_{r}({z})x \neq 0 , \bar{G}_{p}({z})x \neq 0\},\\
    \mathcal{Z}_r & \triangleq \{x \in \mathbb{C}^{n_a} \;|\; \bar{G}_{r}({z})x = 0 , \bar{G}_{p}({z})x \neq 0\},\\
    \mathcal{Z}_p & \triangleq \{x \in \mathbb{C}^{n_a} \;|\; \bar{G}_{r}({z})x \neq 0 , \bar{G}_{p}({z})x = 0\}.
\end{align}
\textit{Sufficiency:} For any given $z$ such that $|z|=1$ , if $ x \in \mathcal{Z}_p$ or $ x \in \mathcal{Z}_{pr}$, choosing $\Gamma(\gamma) =0$ satisfies the constraint of \eqref{lem_s2}. Similarly, if $ x \in \mathcal{Z}$, let us pick $\Gamma(\gamma) = cI_{n_{r}}$ where c is a constant. Then, the value of \eqref{lem_s2} is bounded if there exists a bounded $c$ that makes $ \Xi \triangleq \sup_{|z|=1, x \in \mathcal{Z}} \frac{x^H\big\{\bar{G}_{r}^T(\bar{z})\bar{G}_{r}({z})\big\}x}{x^H\big\{\bar{G}_{p}^T(\bar{z})\Gamma(\gamma)\bar{G}_{p,i}({z})\big\}x}$ bounded. $\Xi$ is bounded  since the denominator cannot become zero (since $x \in \mathcal{Z}$ and $\Gamma(\gamma)$ is full rank), and we have assumed that the $\bar{G}_{r}({z})$ and $\bar{G}_{p}({z})$ are stable (\textit{Assumption \ref{assume_stable}}). Next we prove sufficiency when $x \in \mathcal{Z}_r$.

When condition $1)$ of the lemma statement holds, by definition of a zero $ \forall |z|=1, \nexists s \neq 0 \in \mathbb{C}^{n_a}$ such that $\bar{G}_{r}({z})s = 0$. Thus it follows that $\mathcal{Z}_r = \mathcal{Z}_{pr} = \emptyset$. When condition $2)$ of the lemma statement holds, by definition of a zero $ \forall |z|=1, \nexists s \neq 0$ such that $\bar{G}_{r}({z})s = 0$ and $\bar{G}_{p}({z})s \neq 0$. Thus it follows that $\mathcal{Z}_r = \emptyset$. \\
\textit{Necessity:} Assume that there exists a bounded $\Gamma(\gamma)$ that solves \eqref{lem_s2}. We also assume that there exists a complex number $z_1$ on the unit circle which is a zero of the system $\bar{\Sigma}_{r}$ (including multiplicity and input direction) but are not zeros of $\bar{\Sigma}_{p}$. By definition of a zero, it holds that $\exists s \neq 0$ such that $\bar{G}_{r}(z_1)s=0$ and $\bar{G}_{p}(z_1)s \neq 0$. Under these assumptions, when $z=z_1$ and $x = s$, the constraint of \eqref{lem_s2} can be rewritten as $-s^H\bar{G}_{p}^T(\bar{z}_1)\bar{G}_{p,i}({z_1})s \geq 0$ which cannot hold since $\bar{G}_{p}({z_1})s \neq 0$. That is, the feasibility set of \eqref{lem_s2} is empty which contradicts our assumption. This concludes the proof.
\end{proof}

\bibliographystyle{ieeetr}
\bibliography{ms}
\clearpage
\end{document}